\documentclass[onecolumn]{article} 
\usepackage{float}
\usepackage{amssymb}
\usepackage{amsmath}
\usepackage{amsthm}
\usepackage{caption}
\usepackage{stfloats} 
\usepackage{comment}
\usepackage[ruled]{algorithm2e}
\usepackage{float}
\usepackage{multicol}
\usepackage{color}
\newcounter{multifig}
\usepackage{thmtools,thm-restate}
\usepackage{multirow}
\usepackage{mathtools, cuted}

\newcommand {\R}{\mathbb{R}}

\usepackage{lipsum}

\usepackage{array}
\usepackage{makecell}

\usepackage{stackengine}
\usepackage{bm}
\usepackage[utf8]{inputenc}


\newcommand\norm[1]{\left\lVert #1 \right\rVert} 

\setcounter{secnumdepth}{0} 

%
\title{The Convergence of Finite-Averaging of AIMD for Distributed Heterogeneous Resource Allocations}
\author{Syed Eqbal Alam \\ CIISE,
		Concordia University, \\
		Montreal, Quebec, Canada,
		\\
\and Fabian Wirth \\ Faculty of Computer Science and Mathematics,\\ University of Passau, Passau, Germany,\\
\and Jia Yuan Yu \\ CIISE,
	Concordia University, \\
	Montreal, Quebec, Canada, \\
	\and Robert Shorten\\ Dyson School of Design Engineering \\ Imperial College London, UK}
\date{September 2019}	
 \begin{document}

\maketitle
\newtheorem{thm}{Theorem}
\newtheorem{lem}{Lemma}
\newtheorem{propos}{Proposition}
\newtheorem{corol}{Corollary}
\newtheorem{prob}{Problem}
\newtheorem{defini}{Definition}
\newtheorem{assump}{Assumption}
\begin{abstract}
In several social choice problems, agents collectively make decisions over the allocation of multiple divisible and heterogeneous resources with capacity constraints to maximize utilitarian social welfare. The agents are constrained through computational or communication resources or privacy considerations. In this paper, we analyze the convergence of a recently proposed distributed solution that allocates such resources to agents with minimal communication. It is based on the randomized additive-increase and multiplicative-decrease (AIMD) algorithm. The agents are not required to exchange information with each other, but little with a central agent that keeps track of the aggregate resource allocated at a time. We formulate the time-averaged allocations over finite window size and model the system as a Markov chain with place-dependent probabilities. Furthermore, we show that the time-averaged allocations vector converges to a unique invariant measure, and also, the ergodic property holds. 
\end{abstract}

\section{Introduction}
Recently, the social choice theory has attracted significant attention from the artificial intelligence community \cite{Brandt2016}, \cite{Freeman2017}, \cite{Benade2019}. In many social choice problems, agents collectively make decisions over the allocation of multiple divisible and heterogeneous resources. In this paper, we take the utilitarian viewpoint \cite{Boutilier2012} in which multiple agents coordinate to minimize the sum of their costs subject to capacity constraints on the resources, called {\em budgeted social choice} \cite{Lu2011}.  It is challenging to obtain optimal allocations that solve such problems with multiple resources; furthermore, the agents may have limited computation capability, and may not wish to communicate their costs or allocations because of privacy reasons. To this end, a communication-efficient, iterative, and randomized algorithm \cite{Syed2018} is proposed to minimize the social cost with equality capacity constraints (as stated in Problem \ref{obj_fn1}) in a distributed way. For $m$ resources, at most $m$ bits of information is exchanged per iteration. In the algorithm, the agents do not exchange their allocations or costs with each other, but communicate a little with a central agent that keeps track of the aggregate allocations; however, it does not have access to the individual allocations or costs of agents. Additionally, we use a complicated notion of time steps that the time between two steps is a random variable. Notice that the proposed algorithm is Pareto optimal in the sense that an agent can not reduce its cost without increasing the cost of another agent; also, it is budget balanced. 
Furthermore, the algorithm is privacy-preserving and not a direct revelation mechanism. Also, it is not a strategyproof mechanism---an agent achieves best allocations by being truthful irrespective of what other agents do. Additionally, it is not a random dictatorship mechanism \cite{Gibbard1977}---which is the only strategyproof and ex-post Pareto efficient mechanism \cite{Gross2017}. Pareto optimality for resource allocation with connectivity constraints is recently studied in \cite{Igarashi2019}; furthermore, a fair allocation of resources with reduced communication is studied in \cite{Oh2019}. The proof of convergence of the proposed algorithm \cite{Syed2018} was an open problem; in this paper, we provide that. 

 For a brief background, the proposed algorithm is based on a randomized {\em additive-increase and multiplicative-decrease (AIMD)} algorithm to solve the multiple divisible-resource allocation problem in a distributed way; it is a generalization of \cite{Wirth2014}. The algorithm does not require inter-agent communication; however, a one-bit feedback signal is required from the central agent when the aggregate demand exceeds the capacity of a resource. \cite{Chiu1989} proposed the AIMD algorithm for congestion avoidance in the transmission control protocol (TCP). The theoretical properties of the AIMD algorithm are studied extensively \cite{Dumas2002}, \cite{Srikant2004}, also in resource allocation context \cite{Avrachenkov2017}, \cite{Corless2016}. 
 The proposed algorithm consists of two phases---{\em additive increase (AI)} phase and {\em multiplicative decrease (MD)} phase. In the AI phase, agents continuously increase their demands for a shared resource until the aggregate resource demand exceeds the capacity of that resource; after which the central agent broadcasts a one-bit {\em capacity event} signal in the system, to notify the agents that aggregate demand for the resource has exceeded its capacity, this is done for all the resources in the system. After receiving this signal, an agent decides in a probabilistic way to reduce the resource demand or not; this is the MD phase. The probability distribution depends on the private cost function of the agent and its average resource allocations; by doing so, the social cost is minimized in a distributed and randomized way.

Our contribution here is to derive the matrix for multiple divisible and heterogeneous resources called {\em multi-variate AIMD matrix} for the proposed algorithm. The matrix is a non-negative column stochastic matrix with randomized entries. Additionally, we find the average allocation over a fixed window size and model the system as a Markov chain with place-dependent probabilities. Given the basic settings and mild assumptions on the probability distribution with which an agent responds to the capacity event, we show that the average allocation converges to a {\em unique invariant measure} asymptotically, and the {\em ergodic property} holds. We do so, using the average contraction techniques \cite{Barnsley1988} and ergodic property \cite{Elton1987}. Thus, we say that for large window size, the average allocations reach near-optimal values with high probability. 
\section{Problem formulation}  \label{prob_form}
Suppose that $n$ agents coordinate to access $m$ limited divisible and heterogeneous resources. The resources $\mathcal{R}^1, \mathcal{R}^2, \ldots, \mathcal{R}^m$ have capacities $\mathcal{C}^1, \mathcal{C}^2, \ldots, \mathcal{C}^m$, respectively. Each agent associates a cost to a specific allocation of the resources. 
\subsection{Notation and assumptions}
Let $\mathbb{R}$ denote the set of real numbers and $\mathbb{R}_+$ denote
the set of non-negative real numbers. The vector space of column vectors of
length $n$ with real entries is denoted by $\R^n$. In this space, the vectors $\mathbf{e}_j$ are the canonical basis vectors.
The variable $t \in \mathbb{R}_+$
represents time. For the intermittent discrete process we use the time
sets $\mathbb{N} = \{0,1,2,\ldots\}$ and $\mathbb{N}_+ = \{1,2,3, \ldots\}$. 
Agents are indexed by 
$i=1,2,\ldots,n$, whereas resources are indexed by $j=1,2,\ldots,m$. We
will abbreviate $\underline{n}  \triangleq \{ 1,\ldots,n \}$ and similarly for
$\underline {m}$.
Let
$x_i^j(t) \in [0, \mathcal{C}^j]$ denote the amount of resource
$\mathcal{R}^j$ allocated to agent $i$ at time $t$, for $i\in\underline{n}$
and $j\in\underline{m}$. Every agent $i$ has a cost function $f_i: \mathbb{R}^m \to \mathbb R_+$, which we will assume to satisfy the following standing assumption.
\begin{assump} [Cost function]
For $i \in \underline{n}$, the cost function $f_i : \mathbb{R}^m \to \mathbb
R_+$ is continuously differentiable, strictly convex, and strictly increasing in
each variable. The latter condition means that for every $j \in
\underline{m}$ and $ \mathbf{x} \in \mathbb{R}^m$ the function
\begin{equation*}
    h \mapsto f_i(\mathbf{x} + h \mathbf{e}_j)
\end{equation*}
is strictly increasing.
\end{assump}
 We aim to prove the convergence of the solution \cite{Syed2018} of the following optimization problem.
\begin{prob}[Optimization problem] \label{obj_fn1}
\begin{align*} 
\begin{split}
\underset{{y}^1_1, \ldots, {y}^m_n}{\mathrm{minimize}} \quad &\sum_{i=1}^{n} f_i(y^1_i, y^2_i,
\ldots, y^m_i),    		\\
\mathrm{subject \ to} \quad
&\sum_{i=1}^{n} y^j_i = \mathcal{C}^j, \quad \mbox{for all } j\in\underline{m},		\\
&y^j_i \geq 0, \quad \mbox{for all } i \in \underline{n}, \mbox{ and } j\in\underline{m}.
\end{split}
\end{align*}
\end{prob}
Here, the decision variables are denoted by $y^j_i$. They represent a
certain allotment of the resource $\mathcal{R}^j$ to the agent $i$, for all $i$ and $j$. There are total of $nm$ decision variables. 
 By compactness of the constraint set and continuity of the cost function $f_i$, an optimal solution exists. Additionally, by the assumption of strict convexity of the cost functions $f_i$, the optimal solution is unique.
 
For $k \in \mathbb{N}$, let $t_k$ be the discrete time step, and let $T \in \mathbb{N}_+$ be the fixed time window, where $T \leq k+1$. Also, let $\overline{\mathbf{x}}^j_T(t_k) \in \mathbb{R}^n$ be the time-averaged allocation of resource $\mathcal{R}^j$ over the time window $T$; we define $\overline{\mathbf{x}}^j_T(t_k)$ as follows
\begin{align} \label{eq:T_average}
\overline{\mathbf{x}}^j_T(t_k) & \triangleq \frac{1}{T} \sum_{\ell=0}^{T-1} \mathbf{x}^j(t_k-t_{\ell}), \text{ for } j \in \underline{m}, \text{ and } k \in \mathbb{N}. 
\end{align}
For $k \in \mathbb{N}$, let $\bm{\xi}(t_k)$ is defined as
\begin{align} \label{eq:xim}
\bm{\xi}(t_k) \triangleq [\overline{\mathbf{x}}^1_T(t_k)^\top \quad \overline{\mathbf{x}}^2_T(t_k)^\top \quad \ldots \quad \overline{\mathbf{x}}^m_T(t_k)^\top]^\top. 
\end{align}
Now, let us define a simplex as follows.
\begin{defini}[Simplex] \label{def:simplex}
Let $\Sigma$ be the simplex in $\mathbb{R}^{n}$, we define it as follows,
\begin{align}
\Sigma &\triangleq \big \{ \mathbf{y} = (y_1,y_2,\ldots,y_n) \vert \sum_{i=1}^{n} y_i=1, y_i \geq 0, \text{ for } i \in \underline{n} \big \}. 
\end{align}
\end{defini}
 Similarly, we define simplex $\Sigma^{T}$ in $\mathbb{R}^{Tn}$. We derive multi-variate AIMD matrix for the proposed distributed and randomized algorithm \cite{Syed2018}. Furthermore, we model the system as a Markov chain with place dependent probabilities; the probabilities depend on the average allocation $\overline{\mathbf{x}}^j_T(t_k)$ for finite window size $T$. Now, let $\pi^T$ be an {\em invariant probability distribution} on $\big( \Sigma^{T} \big)^m$, where $m$ is the number resources in the system. Then we aim to show that the proposed distributed and randomized algorithm achieves the following goal,  
\begin{itemize}
\item[(i)] for every finite window size $T \geq 1$, there exists a unique invariant measure $\pi^T$ on $\big( \Sigma^{T} \big)^m$, and,
\item[(ii)] for every $\bm{\xi}(t_k) \in \big( \Sigma^{T} \big)^m$ and given initial value $\bm{\xi}(t_0) \in \big( \Sigma^{T} \big)^m$, the ergodic property holds, that is,
\begin{align*}
\lim_{k \to \infty} \frac{1}{k+1} \sum_{\ell=0}^k \bm{\xi}(t_{\ell}) = \mathbb{E}(\pi^T), \text{ almost surely},
\end{align*}
where $\mathbb{E}(\cdot)$ represents the expected value.
\end{itemize}
Thus, for large window size, the average allocations will reach close to optimal values with high probability.
\section{Multi-resource allocation algorithm} \label{divisible_mul_res}
We briefly present the distributed algorithm \cite{Syed2018} for allocating multiple divisible and heterogeneous resources, it is a generalization of the single allocation algorithm \cite{Wirth2014}, which is based on stochastic AIMD algorithm. Here, each agent runs its distributed algorithm. The algorithm consists of two phases---{\em additive increase (AI)} and {\em multiplicative decrease (MD)}. In the AI phase, an agent keeps increasing its demand for a resource linearly by a positive constant called {\em additive increase factor} until the aggregate demand for the resource reaches the capacity of that resource. After which a central agent broadcasts a one-bit {\em capacity event signal} in the system. After receiving this signal, an agent responds in a probabilistic way, whether to decrease its demand for a resource by a constant called {\em multiplicative decrease factor} or not; this is the MD phase. After the MD phase, again, the AI phase starts, and agents keep increasing their resource demands linearly by additive increase factors until they receive the next capacity event signal. This process repeats over time.

 Let $\alpha^j>0$ be the additive increase factor and $0 < \beta^j < 1$ be the multiplicative decrease factor of an agent for resource $\mathcal{R}^j$, for $j \in \underline{m}$. Furthermore, let $\Omega$ be a sample space for Bernoulli trials, and $\bm{\beta}_i^j: \Omega \to \mathbb{R}$ be a random variable of agent $i$ for resource $\mathcal{R}^j$, it takes value $\beta^j$, or $1$, for $i \in \underline{n}$, and $j \in \underline{m}$. For all $j$, let $\Gamma^j$ be the {\em normalization factor} of resource $\mathcal{R}^j$ broadcasted at the start of the algorithm by the central agent of the system. Now, suppose that the multiagent system starts at time instant $t_0=0$, and every algorithm is initialized with several parameters such as $x_i^j(t_0)$, $\alpha^j$, $\beta^j$, and $\Gamma^j$, for $i \in \underline{n}$, and $j \in \underline{m}$. After time instant $t_0$, agent $i$ keeps increasing its demand for the resource linearly by $\alpha^j$ until the aggregate demand $\sum_{i=1}^n x_i^j(t)$ is equal to $\mathcal{C}^j$ at time instant $t_1^j$. At $t_1^j$ the first capacity event of resource $\mathcal{R}^j$ occurs, and the central agent broadcasts a {\em one-bit} feedback signal to notify the agents in the system to reduce their demand for the resource $\mathcal{R}^j$, for $j \in \underline{m}$, similarly $t_2^j$, and so on. Thus, for $j \in \underline{m}$ and $k \in \mathbb{N}_+$; $t_k^j$ denotes the time instant at which the $k^{\mathrm{th}}$ capacity event of resource $\mathcal{R}^j$ occurs, i.e.,
\begin{align*}
t_k^j = \inf_{t > t_{k-1}^j} \big \{ t \ \big \vert \ \sum_{i=1}^{n} x_i^j(t) = \mathcal{C}^j \big \}.   
\end{align*}  
Hence, for $k \in \mathbb{N}_+$, we write the AI phase as follows.
\begin{align} \label{Alpha_update}
x_i^j (t) &= 	x_i^j (t^j_{k-1}) + \alpha^j \times (t - t^j_{k-1}) \quad \mbox{if } t \in (t^j_{k-1}, t^j_k], \nonumber \\ &\mbox{ for } i \in \underline{n}, \text{ and } j \in \underline{m}.
\end{align}  
 For the sake of simplicity of notation, at the $k^{\mathrm{th}}$ capacity event,  we use $x_i^j(k)$ to represent $x_i^j(t_k^{j})$, and analogously for other variables, for all $j$ and $k$. Now, let $\lambda_i^j(k)$ (cf. \eqref{prob_x}) be the probability with which agent $i$ responds to the $k^{\mathrm{th}}$ capacity event then we write that $\mathbb{P} \big(\bm{\beta}_i^j(k) = \beta^j \big) = \lambda_i^j(k)$ and $\mathbb{P} \big( \bm{\beta}_i^j(k) = 1 \big) = 1 - \lambda_i^j(k)$ , for $i \in \underline{n}$; $j \in \underline{m}$; and $k \in \mathbb{N}$. 
Thus, after receiving this notification, agent $i$ responds by multiplicatively decreasing its demand by $\bm{\beta}_i^j(k) \in \{\beta^j, 1\}$ as 
 \begin{align} \label{Beta_update} 
x_i^j(t)= \bm{\beta}_i^j(k) x_i^j(t^{j}_k) \quad \mbox{if } t = t^{j+}_k,
\end{align}	
 for $i \in \underline{n}; j \in \underline{m}$; and $k \in \mathbb{N}$; this is the MD phase.
  Now, we define the time-averaged allocation as 
\begin{align} \label{average_eqn}
\overline{x}^j_i(k) \triangleq \frac{1}{k+1} \sum_{\ell=0}^k x^j_i(\ell), \quad \mbox{ for } i \in \underline{n}, \mbox{ and } j \in \underline{m}.
\end{align}
For all $i,j$, and $k$, the probability $\lambda^j_{i}(k)$ is calculated as
\begin{align} \label{prob_x}
 \lambda^j_{i}(k) = \Gamma^j  
\frac{{\nabla_j} f_i \big( \overline{x}^1_i(k), \ldots, \overline{x}^m_i(k) \big)}{\overline{x}^j_i(k)}.
\end{align}
 Note that for all $j$, the normalization factor $\Gamma^j$ is calculated by the central agent at the start of the system; an agent receives $\Gamma^j$ when it joins the system. It is used to keep the probability $\lambda_i^j(k)$ in $(0,1)$, for all $i$ and $k$. We call the probability $\lambda^j_{i}(k)$ as {\em drop probability} with which an agent multiplicatively decreases ({\em backs-off}) the demand for the resource $\mathcal{R}^j$, for $j \in \underline{m}$. After backing-off, agents can again start increasing their demands linearly until the next capacity event occurs. This process repeats over time.

\section{AIMD matrix for multiple resources} \label{AIMD_mat}
 Let $(t^j_{k+1} - t^j_k)$ be the time between the $k^{\mathrm{th}}$ and the ${(k+1)}^{\mathrm{th}}$ capacity events, for $k \in \mathbb{N}$. We proceed as follows to derive the AIMD matrix for multiple variables and model the system as a Markov chain with place dependent probabilities.  In \eqref{der_stoc_mat_1}, $x_i^j(k+1)$ represents the allocation at the $(k+1)^{\mathrm{th}}$ capacity event, and it is written as
\begin{align} \label{der_stoc_mat_1}
x_i^j(k+1) = \bm{\beta}_i^j(k) x_i^j(k) + \alpha^j \times (t^j_{k+1} - t^j_k), 
\end{align}  
 where $\mathbb{P} \big (\bm{\beta}_i^j(k) = \beta^j \big) = \lambda_i^j(k)$ and $\mathbb{P} \big (\bm{\beta}_i^j(k) = 1 \big) = 1- \lambda_i^j(k)$, for $i \in \underline{n}$; $j \in \underline{m}$; and $k \in \mathbb{N}$.

 With little algebraic manipulation and using the fact that $\sum_{i=1}^n x_i^j(k+1) = \sum_{i=1}^n x_i^j(k) = \mathcal{C}^j$, we find the value of $(t^j_{k+1} - t^j_k)$. Note that the time between two capacity events is a random variable.
Let $\mathbf{x}^j = [x_1^j \ x_2^j \ \ldots \ x_n^j ]^\top$, and let $\mathbf{e}^\top = [1 \quad 1 \quad \ldots \quad 1]$, where $\mathbf{e} \in \mathbb{R}^n$. Then after replacing the value of $(t^j_{k+1} - t^j_k)$ in \eqref{der_stoc_mat_1}, we obtain
\begin{align} \label{def:stoc_matrix}
\mathbf{x}^j(k+1) =
\left (\begin{bmatrix}
\bm{\beta}_1^j(k) & \ldots & 0&\ldots&&0\\
0& \bm{\beta}_2^j(k)& \ldots & 0 &\ldots&0\\
\vdots & &  \ddots & & &\vdots \\
\vdots & &   & & \ddots &\vdots \\
0  &  & \ldots  &   & 0& \bm{\beta}_n^j(k) \\
\end{bmatrix} 
+  \frac{1}{n} \mathbf{e}
\begin{bmatrix}
  1-\bm{\beta}_1^j(k) &
   \hdots &
  1-\bm{\beta}_n^j(k)
\end{bmatrix} \right) \mathbf{x}^j(k)
\end{align}
Thus, we write
\begin{align} \label{MC_Ax}
\mathbf{x}^j(k+1) = \mathbf{A}^j(k) \mathbf{x}^j(k),\quad  \text{for } j \in \underline{m}, \text{ and } k \in \mathbb{N},
\end{align}
where $\mathbf{A}^j(k) \in \mathbb{R}^{n \times n}$ is the matrix for resource $\mathcal{R}^j$; $\mathbf{A}^j(k)$ is called multi-variate {\em AIMD matrix} with randomized entries; it is a non-negative column stochastic matrix. 
 Also, let $\tilde{\bm{\beta}}^j=[\bm{\beta}_1^j \quad \bm{\beta}_2^j \quad \ldots \quad \bm{\beta}_n^j]^\top$, where $\bm{\beta}_i^j \in \{ \beta^j,1 \}$, for $i \in \underline{n}$ and $j \in \underline{m}$. Let $\textnormal{diag}(\tilde{\bm{\beta}}^j)$ be the diagonal matrix with $\bm{\beta}_i^j \in \{ \beta^j,1 \}$ as a diagonal, for $i \in \underline{n}$, and $j \in \underline{m}$. Then, for $j \in \underline{m}$ and $k \in \mathbb{N}$, we define matrix $A^j$ as
\begin{align} \label{def:matrix_A}
A^j \triangleq \textnormal{diag} (\tilde{\bm{\beta}}^j) + \frac{1}{n} \mathbf{e} (\mathbf{e}^\top -  \tilde{\bm{\beta}^j}^\top),
\end{align}
which is a non-negative column stochastic matrix. 
 Now, for $j \in \underline{m}$, let $\Upsilon^j$ be the set defined as follows:
\begin{align*}
\Upsilon^j \triangleq \big \{ \tilde{\bm{\beta}} = (\bm{\beta}_1^j, \ldots, \bm{\beta}_n^j) \in \mathbb{R}^n \vert \bm{\beta}_i^j \in \{ \beta^j, 1\},  \text{ for } i \in \underline{n} \big \}.
\end{align*}
Additionally, for $j \in \underline{m}$, let $\mathcal{F}^j$ be the set of all possible AIMD matrices $A^j$ of resource $\mathcal{R}^j$. Note that to represent a generalized version of a matrix, we drop the capacity event index $k$ from that matrix. Also, notice that for $n$ agents in the system, the set $\mathcal{F}^j$ consists of $2^n$ AIMD matrices of resource $\mathcal{R}^j$, for $j \in \underline{m}$.
 We define the set $\mathcal{F}^j$ as follows:
\begin{align*} 
\mathcal{F}^j \triangleq \big \{ \textnormal{diag}(\tilde{\bm{\beta}^j}) + \frac{1}{n} \mathbf{e} (\mathbf{e}^\top -  \tilde{\bm{\beta}^j}^\top) \vert \tilde{\bm{\beta}^j} \in \Upsilon^j \big \},\ \text{for } j \in \underline{m}.
\end{align*}
 Recall that the drop probability $\lambda_i^j(\cdot)$ is the probability with which agent $i$ responds to the capacity event of resource $\mathcal{R}^j$, for  $i \in \underline{n}$, and $j \in \underline{m}$. For $A^j \in \mathcal{F}^j$, we obtain the following probability:
\begin{align} \label{prod_prob}
\mathbb{P} \big(\mathbf{A}^j(k) = A^j \big) = \prod_{\bm{\beta}_i^j = \beta^j} \lambda_i^j(k) \prod_{\bm{\beta}_i^j = 1} (1- \lambda_i^j(k)), 
\end{align} 
we assume that the probability $\lambda_i^j(k)$ is independent, for  $i \in \underline{n}; j \in \underline{m}; \text{ and } k \in \mathbb{N}$. 

\section{Convergence analysis} \label{Conv_anal}
In this section, we assume the case where each agent considers its average allocation over the last $T \in \mathbb{N}_+$ capacity events to determine the probabilities of the back-off at the next capacity event, recall that $T$ is the fixed time window. For the sake of simplicity of analysis, we consider two resources $\mathcal{R}^1$ and $\mathcal{R}^2$, but the model can easily be extended to $m$ arbitrary resources. 
 Without loss of generality, we consider that if capacity events of the resource $\mathcal{R}^1$ occur then the evolution of its allocation is independent of the  evolution of allocation of the resource $\mathcal{R}^2$, except for the fact that the back-off probabilities depend on the average allocations of both the resources.

Recall that $\overline{\mathbf{x}}^j_T(k) \in \mathbb{R}^n$ (defined in \eqref{eq:T_average}) is the average allocation of resource $\mathcal{R}^j$ over time window $T$.
To model the evolution, it is convenient to introduce the vector of past averages as follows, for  $j=1,2$, and $k \in \mathbb{N}$,
\begin{align*}
\mathbf{x}^j_T(k)  &\triangleq \big [\mathbf{x}^j(k)^\top \quad \frac{1}{2} \big (\mathbf{x}^j(k) + \mathbf{x}^j(k-1)\big)^\top \ldots \\ & \frac{1}{T} \big(\mathbf{x}^j(k) + \mathbf{x}^j(k-1) + \ldots + \mathbf{x}^j(k-T+1) \big)^\top \big]^\top.
\end{align*}
For $j=1,2$, and $k \in \mathbb{N}$, let $\mathbf{D}^j(k) \in \mathbb{R}^{Tn \times Tn}$ be the matrix represented as follows. 
\begin{align} \label{mat_D}
  \mathbf{D}^j(k) = 
&\begin{bmatrix}
  \mathbf{A}^j(k) & 0 & 0 &\ldots & 0  \\
  \frac{1}{2} \big( \mathbf{A}^j(k) + I \big)  & 0 & 0 & \ldots  & \vdots \\
\frac{1}{3} \mathbf{A}^j(k)  & \frac{2}{3}I & 0 & \ldots &\\
  \vdots & 0 &  \ddots & \ddots & \vdots \\
    & \vdots &  & \ddots   & \\
\frac{1}{T} \mathbf{A}^j(k)  &  0& 0 & \ldots \frac{T-1}{T}I & 0
\end{bmatrix}.
\end{align}
 Let $\mathcal{Q}^j$ be the set of all $D^j$ matrices. Now, for $j=1,2$, and $k \in \mathbb{N}$, at the $(k+1)^{\mathrm{th}}$ capacity event, we have the following update for resource $\mathcal{R}^j$,
\begin{align*}
\mathbf{x}_T^j(k+1) = \mathbf{D}^j(k) \mathbf{x}_T^j(k).
\end{align*}
For $k \in \mathbb{N}$, let $K^1$ denote the set of capacity event time-instances $t_k^1$ of resource $\mathcal{R}^1$, analogously $K^2$ is defined. Suppose that a union of all the capacity event time-instances of $K^1$ and $K^2$ is taken and they are ordered in an increasing fashion, let this set be denoted by $K$, that is, $ K \triangleq K^1 \cup K^2$. Now, for $k \in \mathbb{N}$, let $t_k \in K$ be the time of occurrence of the $k^{\mathrm{th}}$ capacity event (combined). We rewrite the state vector $\bm{\xi}(k)$ (cf. \eqref{eq:xim}) for two resources as follows 
\begin{align} \label{def_xi}
\bm{\xi}(k) = [ \mathbf{x}_T^1(k)^\top \quad \mathbf{x}_T^2(k)^\top]^\top, \text{ for } k \in \mathbb{N}.
\end{align}
Now, let matrix $\mathbf{U}(k) \in \mathbb{R}^{2Tn \times 2Tn}$ be defined as
\begin{align}  \label{def:mat_U} 
\mathbf{U}(k) \triangleq \left\{
\begin{array}{ll}
\begin{bmatrix}
  \mathbf{D}^1(k) & 0  \\
 0 & I 
\end{bmatrix} & \mbox{ if } t_k \in K^1,  \vspace{0.1in}\\
 \begin{bmatrix}
  I & 0  \\
 0 & \mathbf{D}^2(k)
\end{bmatrix} & \mbox{ if } t_k \in K^2.\\
\end{array}
\right.
\end{align}
 Therefore, based on the occurrence of the capacity event of a particular resource, the matrix $\mathbf{U}(k)$ is chosen, and the corresponding state vectors are updated, but state vectors of other resource remain unchanged. Let $\mathcal{S}$ be the set of all $U$ matrices, and let $U_\sigma \in \mathcal{S}$. Then, we obtain the following Markov chain:
\begin{align} \label{Markov1}
  \bm{\xi}(k+1) =  \mathbf{U}(k) \bm{\xi}(k), \quad \text{for } k \in \mathbb{N},
\end{align}
    with place-dependent probabilities as follows:
  \begin{align*}
  \mathbb{P}(\mathbf{U}(k) &= U_\sigma) = p_U(\overline{\mathbf{x}}_{T}^1(k),\overline{\mathbf{x}}_{T}^2(k)), \text{for } k \in \mathbb{N}. 
  \end{align*}
  Here, $\overline{\mathbf{x}}^j_{T}(k) = \frac{1}{T} \big(\mathbf{x}^j(k) + \mathbf{x}^j(k-1) +\ldots + \mathbf{x}^j(k-T+1) \big)$ represents the $T^{\mathrm{th}}$ element of vector $\mathbf{x}^j_T(k)$, for $j=1,2$. Notice that $\overline{\mathbf{x}}^1_{T}(k)$ is the average allocation of resource $\mathcal{R}^1$ over the time interval $[k-T+1, \ldots, k]$, analogously, $\overline{\mathbf{x}}^2_{T}(k)$. The probability $p_U(\cdot)$ depends on the average allocation of all resources over the interval $[k-T+1, \ldots, k]$ for a particular realization. We rewrite \eqref{Markov1} as follows:
\begin{align} \label{Markov2}
  \bm{\xi}(k+1) &=  \mathbf{U}(k) \ldots \mathbf{U}(k-T+1) \bm{\xi}(k-T+1), \text{where } T \leq k+1, \text{ for } k \in \mathbb{N}.
 \end{align}
Now, we show that the matrices $U \in \mathcal{S}$ are non-expansive with respect to a suitable norm. The norms are defined as follows. 
\begin{defini} [Norms]
\begin{itemize}
\item[(i)] For $\ell = 1, 2, \ldots, T$, let $\mathbf{z}_{\ell} \in \mathbb{R}^n$,
 and
  $\mathbf{z} = [ {\mathbf{z}_1}^\top \ {\mathbf{z}_2}^\top \ \hdots \ {\mathbf{z}_T}^\top]^\top$. 
  
  Then $ \norm{\mathbf{z}}_T \triangleq \max_{\ell = 1, 2, \ldots, T} \norm{\mathbf{z}_\ell}_1, \mbox{ where } \norm{\mathbf{z}_{\ell}}_1 \triangleq \sum_{i=1}^n \vert z_{\ell i} \vert$.
\item[(ii)] For $j = 1, 2$;
$\mathbf{y}^j \in \mathbb{R}^{Tn}; \text{ and } 
\mathbf{y}= [  {\mathbf{y}^1}^\top \ {\mathbf{y}^2}^\top ]^\top.$

 Then $\norm{\mathbf{y}} \triangleq \max \{ \norm{\mathbf{y}^1}_T, \norm{\mathbf{y}^2}_T \}$.
\end{itemize}
\end{defini}
\begin{defini} [Invariant subspace]
For $j=1,2$, let $W^j$ be a subspace of $\mathbb{R}^{Tn}$. If $D^j W^j \subset W^j$, then $W^j$ is called an invariant subspace under all matrices $D^j \in \mathcal{Q}^j$.
 \end{defini}
Now, let $\bm{\beta}^j=[\beta^j \  \ldots \ \beta^j]^\top \in \mathbb{R}^n$; furthermore, for $j=1,2,$ and $k \in \mathbb{N}$, suppose that $B^j$ be the matrices where all agents back-off in the sense that multiplicative decrease factor $\tilde{\bm{\beta}}^j = \bm{\beta}^j$. Thus, $B^j$ are column stochastic matrices with positive entries. Similar to \eqref{def:matrix_A}, for $j = 1,2$, and $k \in \mathbb{N}$, we define matrix $B^j \in \mathcal{F}^j$, as follows:
\begin{align} \label{def:matrix_B}
B^j \triangleq \textnormal{diag}(\bm{\beta}^j) + \frac{1}{n} \mathbf{e}(\mathbf{e}^\top -  {\bm{\beta}^{j}}^\top).
\end{align}
For $j=1, 2$, let matrix $E^j \in \mathcal{Q}^j$ consists of the matrix $B^j \in \mathcal{F}^j$. We write $\mathbf{E}^j(k)$, similar to $\mathbf{D}^j(k)$ (cf. \eqref{mat_D}) by replacing $\mathbf{A}^j(k)$ with $\mathbf{B}^j(k)$. Thus, we have.
\begin{lem}[Contraction \cite{Wirth2014}] \label{lem:contraction} 
 For $j=1,2$, let $\mathbf{z}^j_\ell \in \mathbb{R}^n$, where $\ell = 1,2,\ldots,T$, and let $\mathbf{z}^j = [\mathbf{z}^{j\top}_{1} \ \mathbf{z}^{j\top}_{2} \ \ldots \ \mathbf{z}^{j\top}_{T} ]^\top$.
 \begin{itemize}
\item[(i)] For all matrices $D^j \in \mathcal{Q}^j$, the non-expansive property $\norm{D^j \mathbf{z}^j}_T \leq \norm{\mathbf{z}^j}_T$ holds. 
\item[(ii)]  Let $W^j$ be the subspace defined as
\begin{align*}
W^j \triangleq \{\mathbf{z}^j \in \mathbb{R}^{Tn} \quad \vert \quad \mathbf{e}^\top \mathbf{z}^j_{t} = 0,  \text{ for } t=1, 2, \ldots, T \}.
\end{align*}
Then $W^j$ is invariant under all $D^j \in \mathcal{Q}^j$.
\item[(iii)] For all matrices $E^j \in \mathcal{Q}^j$ and non-zero $\mathbf{z}^j \in W^j$, the contraction property $\norm{E^j \mathbf{z}^j}_T < \norm{\mathbf{z}^j}_T$ holds.
\end{itemize}
\end{lem}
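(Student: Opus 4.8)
The plan is to exploit the block lower-bidiagonal structure of $\mathbf{D}^j(k)$ in \eqref{mat_D} together with the fact that $\mathbf{A}^j(k)$ is a non-negative column-stochastic matrix. First I would read off the block-by-block action of $\mathbf{D}^j(k)$: writing $\mathbf{z}^j = [\mathbf{z}^{j\top}_1 \ \ldots \ \mathbf{z}^{j\top}_T]^\top$, the first block of $\mathbf{D}^j(k)\mathbf{z}^j$ is $\mathbf{A}^j(k)\mathbf{z}^j_1$, while for $\ell = 2,\ldots,T$ the $\ell$-th block is $\frac{1}{\ell}\mathbf{A}^j(k)\mathbf{z}^j_1 + \frac{\ell-1}{\ell}\mathbf{z}^j_{\ell-1}$. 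This mirrors the averaging identity $\frac{1}{\ell}\sum_{s=0}^{\ell-1}\mathbf{x}^j(k+1-s) = \frac{1}{\ell}\mathbf{A}^j(k)\mathbf{x}^j(k) + \frac{\ell-1}{\ell}\cdot\frac{1}{\ell-1}\sum_{s=0}^{\ell-2}\mathbf{x}^j(k-s)$ that defines the matrix. The two weights $\frac{1}{\ell}$ and $\frac{\ell-1}{\ell}$ sum to one, and this convexity drives every estimate below.

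For part (i) I would invoke the standard fact that a non-negative column-stochastic matrix is non-expansive in the $1$-norm: for any $\mathbf{v} \in \mathbb{R}^n$, $\norm{\mathbf{A}^j(k)\mathbf{v}}_1 \leq \sum_s |v_s| \sum_i (\mathbf{A}^j(k))_{is} = \norm{\mathbf{v}}_1$, since each column sums to one. Applying this to the block formulas and using $\norm{\mathbf{z}^j_1}_1, \norm{\mathbf{z}^j_{\ell-1}}_1 \leq \norm{\mathbf{z}^j}_T$, the $\ell$-th block of $\mathbf{D}^j(k)\mathbf{z}^j$ has $1$-norm at most $\frac{1}{\ell}\norm{\mathbf{z}^j}_T + \frac{\ell-1}{\ell}\norm{\mathbf{z}^j}_T = \norm{\mathbf{z}^j}_T$; taking the maximum over $\ell$ gives non-expansiveness. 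For part (ii) I would use the companion identity $\mathbf{e}^\top \mathbf{A}^j(k) = \mathbf{e}^\top$ (column sums equal one). If $\mathbf{e}^\top\mathbf{z}^j_t = 0$ for all $t$, then applying $\mathbf{e}^\top$ to each block formula produces a linear combination of $\mathbf{e}^\top\mathbf{z}^j_1$ and $\mathbf{e}^\top\mathbf{z}^j_{\ell-1}$, all of which vanish; hence $\mathbf{D}^j(k)\mathbf{z}^j \in W^j$, so $W^j$ is invariant.

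The substance is in part (iii). The key observation is that the all-back-off matrix reduces to $B^j = \beta^j I + \frac{1-\beta^j}{n}\mathbf{e}\mathbf{e}^\top$, because $\bm{\beta}^j = \beta^j \mathbf{e}$ gives $\textnormal{diag}(\bm{\beta}^j) = \beta^j I$ and $\mathbf{e}^\top - {\bm{\beta}^j}^\top = (1-\beta^j)\mathbf{e}^\top$. Consequently, on the zero-sum subspace, i.e.\ whenever $\mathbf{e}^\top\mathbf{v} = 0$, the rank-one term annihilates and $B^j\mathbf{v} = \beta^j\mathbf{v}$, so $\norm{B^j\mathbf{v}}_1 = \beta^j\norm{\mathbf{v}}_1$. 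Now take $\mathbf{z}^j \in W^j \setminus\{0\}$ and repeat the block estimate with $B^j$ in place of $\mathbf{A}^j$: since each $\mathbf{z}^j_t$ lies in the zero-sum subspace, the first block has norm exactly $\beta^j\norm{\mathbf{z}^j_1}_1$, and for $\ell \geq 2$ the $\ell$-th block is bounded by $\frac{\beta^j}{\ell}\norm{\mathbf{z}^j}_T + \frac{\ell-1}{\ell}\norm{\mathbf{z}^j}_T = \frac{\beta^j + \ell - 1}{\ell}\norm{\mathbf{z}^j}_T$. As $0 < \beta^j < 1$, every factor $\frac{\beta^j+\ell-1}{\ell} = 1 - \frac{1-\beta^j}{\ell}$ is strictly below $1$ and increasing in $\ell$, so the maximum over $\ell \in \{1,\ldots,T\}$ is attained at $\ell = T$. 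This yields the uniform bound $\norm{E^j\mathbf{z}^j}_T \leq \big(1 - \frac{1-\beta^j}{T}\big)\norm{\mathbf{z}^j}_T < \norm{\mathbf{z}^j}_T$, which is the asserted strict contraction (indeed with an explicit modulus).

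I expect the only delicate point to be bookkeeping rather than mathematics: correctly extracting the block recursion from \eqref{mat_D}, and making sure the strict inequality in (iii) is traced to the coefficient $\beta^j < 1$ and \emph{not} to any assumption on the individual blocks $\mathbf{z}^j_\ell$ --- the bound must hold for every nonzero $\mathbf{z}^j \in W^j$, including configurations where several blocks simultaneously attain $\norm{\cdot}_1 = \norm{\mathbf{z}^j}_T$. The reduction of $B^j$ to a scalar multiple of the identity on the zero-sum subspace is what makes the contraction uniform and clean; everything else follows from the non-expansiveness and unit-row/column-sum properties of column-stochastic matrices combined with the convexity of the weights $\frac{1}{\ell}$ and $\frac{\ell-1}{\ell}$.
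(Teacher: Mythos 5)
Your proposal is correct. For part (i) you follow essentially the same route as the paper: read off the block structure of $\mathbf{D}^j(k)$, use the triangle inequality, observe that a non-negative column-stochastic $A^j$ satisfies $\norm{A^j\mathbf{v}}_1 \leq \norm{\mathbf{v}}_1$, and note that each block of $D^j\mathbf{z}^j$ is a convex combination (weights $\tfrac{1}{\ell}$ and $\tfrac{\ell-1}{\ell}$) of quantities bounded by $\norm{\mathbf{z}^j}_T$. The difference is in parts (ii) and (iii): the paper proves only (i) in detail and disposes of (ii) and (iii) with the single line ``follows \cite{Wirth2014},'' whereas you supply complete arguments. Your (ii) is the expected one-liner from $\mathbf{e}^\top A^j = \mathbf{e}^\top$ (the paper itself uses exactly this identity later, in Lemma \ref{lem:non-expansive}(i)). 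Your (iii) rests on the observation that $B^j = \beta^j I + \tfrac{1-\beta^j}{n}\mathbf{e}\mathbf{e}^\top$ acts as $\beta^j I$ on the zero-sum subspace, so the first block of $E^j\mathbf{z}^j$ contracts by exactly $\beta^j$ and the $\ell$-th block by at most $1 - \tfrac{1-\beta^j}{\ell}$; this is correct and buys something the paper's citation does not make explicit, namely a uniform contraction modulus $1 - \tfrac{1-\beta^j}{T} < 1$ valid for every nonzero $\mathbf{z}^j \in W^j$, which is stronger than the pointwise strict inequality stated in the lemma and would simplify the average-contraction verification downstream. No gaps.
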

\begin{proof}
\begin{itemize}
\item[(i)] For $j=1,2$, since $D^j \in \mathcal{Q}^j$, we obtain the following norm,
\begin{align} \label{eq:norm_10}
  \norm{D^j \mathbf{z}^j}_T =& 
\norm{\begin{bmatrix}
  A^j & 0 & 0 &\ldots & 0 \\
  \frac{1}{2} \big( A^j + I \big)  & 0 & 0 & \ldots  & \vdots \\
\frac{1}{3} A^j  & \frac{2}{3}I & 0 & \ldots &\\
  \vdots & 0 &  \ddots & \ddots & \vdots \\
   \vdots &  &  & \ddots   & \\
\frac{1}{T} A^j  &  0& 0 & \ldots \frac{T-1}{T}I & 0 \\
\end{bmatrix}
\begin{bmatrix}
  \mathbf{z}_{1}^j  \\
  \mathbf{z}_{2}^j \\
  \vdots\\
\mathbf{z}_{T}^j
\end{bmatrix}}_T \nonumber\\
= &\max \Big \{\norm{A^j \mathbf{z}_{1}^j}_1, \frac{1}{2} \norm{A^j \mathbf{z}_{1}^j + \mathbf{z}_{1}^j}_1,  \norm{ \frac{1}{3} A^j \mathbf{z}_{1}^j + \frac{2}{3} \mathbf{z}_{2}^j}_1, \ldots,  \nonumber \\ &\norm{ \frac{1}{T} A^j \mathbf{z}_{1}^j + \frac{T-1}{T} \mathbf{z}_{T-1}^j}_1 \Big \}, \quad \text{for } j=1,2.
\end{align}
For $a,b \in \mathbb{R}^n$, we know that $\norm{a + b}_1 \leq \norm{a}_1 + \norm{b}_1$. Then for $j=1,2$, we rewrite \eqref{eq:norm_10} as follows, 
\begin{align} \label{eq:norm_11}
\norm{D^j \mathbf{z}^j}_T  \leq& \max \Big \{\norm{A^j \mathbf{z}_{1}^j}_1, \frac{1}{2} \norm{A^j \mathbf{z}_{1}^j}_1 + \frac{1}{2} \norm{\mathbf{z}_{1}^j}_1, \frac{1}{3} \norm{ A^j \mathbf{z}_{1}^j}_1 + \frac{2}{3} \norm{\mathbf{z}_{2}^j}_1, \ldots,  \nonumber \\ & \frac{1}{T} \norm{  A^j \mathbf{z}_{1}^j}_1 + \frac{T-1}{T} \norm{\mathbf{z}_{T-1}^j}_1 \Big \}.
\end{align}
 Now, let the matrix $A^j \in \mathcal{F}^j$ be represented as $[a^j_{i \ell}]$, where $a^j_{i \ell} \geq 0$, for $i \in \underline{n}$; $\ell \in \underline{n}$; and $j=1,2$. Then, we write that
 \begin{align} \label{eq:l1-norm0}
 \norm{A^j \mathbf{z}_{1}^j}_1 = \sum_{i=1}^{n} \big \vert \sum_{\ell=1}^{n}(a_{i\ell}^j z^j_{1 \ell}) \big \vert, \quad \text{for } j=1,2. 
   \end{align}
   Since
   \begin{align} \label{eq:l1-norm01}
 \sum_{i=1}^{n} \big \vert \sum_{\ell=1}^{n}(a_{i\ell}^j z^j_{1 \ell}) \big \vert  \leq \sum_{i=1}^{n}\sum_{\ell=1}^{n} \big \vert (a_{i \ell}^j z^j_{1 \ell}) \big \vert, \quad \text{for } j=1, 2,
   \end{align}
   and
   \begin{align} \label{eq:l1-norm02}
 \sum_{i=1}^{n}\sum_{\ell=1}^{n} \big \vert (a_{i \ell}^jz^j_{1 \ell}) \big \vert = \sum_{\ell=1}^{n} \big( \sum_{i=1}^{n} a_{i \ell}^j\big) \big \vert z^j_{1 \ell} \big \vert, \quad \text{for } j=1,2.
  \end{align}
 Furthermore, since $A^j \in \mathcal{F}^j$ is a column stochastic matrix; therefore, $\sum_{i=1}^{n} a_{i \ell}^j = 1$, for $j=1, 2$. 
 Hence, from \eqref{eq:l1-norm0}, \eqref{eq:l1-norm01} and \eqref{eq:l1-norm02}, and by definition of the norm $\norm{\cdot}_1$, we write that 
 \begin{align} \label{eq:l1-norm} 
 \norm{A^j \mathbf{z}_{1}^j}_1 \leq \sum_{\ell=1}^{n} \big \vert z^j_{1 \ell} \big \vert = \norm{\mathbf{z}_{1}^j}_1, \quad \text{for } j=1,2.
 \end{align}
Placing the value of \eqref{eq:l1-norm} in \eqref{eq:norm_11}, we  obtain the following result, 
\begin{align} \label{eq:norm_02}
\norm{D^j \mathbf{z}^j}_T & \leq \max \Big \{\norm{\mathbf{z}_{1}^j}_1,\frac{1}{2}\big(\norm{\mathbf{z}_{1}^j}_1 +\norm{\mathbf{z}_{1}^j}_1 \big), \frac{1}{3} \big( \norm{\mathbf{z}_{1}^j}_1 + 2 \norm{\mathbf{z}_{2}^j}_1 \big), \ldots,  \nonumber \\ &\frac{1}{T} \big ( \norm{\mathbf{z}_{1}^j}_1 + (T-1) \norm{\mathbf{z}_{T-1}^j}_1 \big) \Big \}, \quad \text{for } j=1, 2.  
\end{align}
Now, without loss of generality, let $\norm{\mathbf{z}_{t}^j}_1$ be the maximum value among all $\norm{\mathbf{z}_{v}^j}_1$, where $v \in \{1,2,\ldots,t-1,t+1,\ldots,T \}$. 
Then we rewrite \eqref{eq:norm_02} as
\begin{align} \label{eq:norm_03}
\norm{D^j \mathbf{z}^j}_T & \leq \norm{\mathbf{z}_t^j}_1, \quad \text{for } j=1,2.  
\end{align}
Additionally, using the above assumption and the definition of the norm $\norm{\cdot}_T$, we get $\norm{\mathbf{z}^j}_T = \norm{\mathbf{z}_{t}^j}_1$, for $j=1,2$. Hence, from \eqref{eq:norm_03}, we obtain $\norm{D^j \mathbf{z}^j}_T \leq \norm{\mathbf{z}^j}_T$, for $j=1,2$.
\end{itemize}
Proof of (ii) and (iii), follows \cite{Wirth2014}.
\end{proof}
Now, using the above results, we show that the matrices $U \in \mathcal{S}$ are non-expansive.
\begin{lem}[Non-expansive matrix] \label{lem:non-expansive} 
\begin{itemize}
\item[(i)] For $j=1,2$, let $\mathbf{z}^j_{t} \in \mathbb{R}^n$, where $t=1,2,\ldots,T$, and let $\mathbf{z}^j = [{\mathbf{z}^j_{1}}^\top \ \ldots \ {\mathbf{z}^j_{T}}^\top]^\top$. 
 Also, let the subspace $W$ be defined as,
\begin{align*}  
W &\triangleq \{(\mathbf{z}^1, \mathbf{z}^2) \in \mathbb{R}^{2Tn} \quad \vert \quad \mathbf{e}^\top \mathbf{z}^1_{t} = \mathbf{e}^\top \mathbf{z}^2_{t} = 0, \text{ for } t=1,2,\ldots,T\}.
\end{align*}
Then, $W$ is invariant under all matrices $U \in \mathcal{S}$.
\item[(ii)] For $\mathbf{z} \in W$, matrix $U \in \mathcal{S}$ is non-expansive on the subspace $W$ with respect to the norm $\norm{.}$, i.e., $\norm{U\mathbf{z}} \leq \norm{\mathbf{z}}$.
\end{itemize}
\end{lem}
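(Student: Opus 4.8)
The plan is to reduce both claims to the single-resource contraction result, Lemma~\ref{lem:contraction}, by exploiting two structural facts: that the subspace $W$ is precisely the product $W^1 \times W^2$ of the per-resource invariant subspaces, and that every $U \in \mathcal{S}$ is block-diagonal with each diagonal block being either some $D^j \in \mathcal{Q}^j$ or the identity $I$. The first fact is immediate from the definitions: a pair $(\mathbf{z}^1,\mathbf{z}^2)$ lies in $W$ exactly when $\mathbf{e}^\top \mathbf{z}^j_{t} = 0$ for all $t=1,\ldots,T$ and each $j \in \{1,2\}$, which is the conjunction of $\mathbf{z}^1 \in W^1$ and $\mathbf{z}^2 \in W^2$. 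Once these observations are in place, both the invariance and the non-expansiveness lift componentwise through the $\max$-norm $\norm{\cdot}$.

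For part (i), I would take $\mathbf{z} = (\mathbf{z}^1, \mathbf{z}^2) \in W$ and split on the two forms of $U$ in \eqref{def:mat_U}. If $t_k \in K^1$ then $U\mathbf{z} = (\mathbf{D}^1(k) \mathbf{z}^1, \mathbf{z}^2)$; Lemma~\ref{lem:contraction}(ii) gives $\mathbf{D}^1(k) \mathbf{z}^1 \in W^1$, while trivially $\mathbf{z}^2 = I\mathbf{z}^2 \in W^2$, so $U\mathbf{z} \in W^1 \times W^2 = W$. The case $t_k \in K^2$ is symmetric, with the roles of the two blocks interchanged. Hence $W$ is invariant under every $U \in \mathcal{S}$.

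For part (ii), again take $\mathbf{z} = (\mathbf{z}^1, \mathbf{z}^2) \in W$ and suppose $t_k \in K^1$, so that $U\mathbf{z} = (\mathbf{D}^1(k) \mathbf{z}^1, \mathbf{z}^2)$. By the definition of $\norm{\cdot}$ as the maximum of the two block $\norm{\cdot}_T$-norms, $\norm{U\mathbf{z}} = \max \{ \norm{\mathbf{D}^1(k) \mathbf{z}^1}_T, \norm{\mathbf{z}^2}_T \}$. Lemma~\ref{lem:contraction}(i) bounds $\norm{\mathbf{D}^1(k) \mathbf{z}^1}_T \leq \norm{\mathbf{z}^1}_T$, while the second entry is left unchanged, so $\norm{U\mathbf{z}} \leq \max \{ \norm{\mathbf{z}^1}_T, \norm{\mathbf{z}^2}_T \} = \norm{\mathbf{z}}$. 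The case $t_k \in K^2$ follows identically, completing the argument.

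The argument is essentially bookkeeping, and I do not anticipate a genuinely hard step. The one point requiring care, and the place I would focus the writeup, is the interplay between the block structure of $U$ and the $\max$-norm: one must confirm that the untouched block (multiplied by $I$) neither leaves its subspace nor increases its $\norm{\cdot}_T$-length, so that the $\max$ over the two blocks is dominated by the $\max$ over the two inputs. This is transparent here precisely because the norm is a product $\max$-norm and $U$ carries no off-diagonal coupling between the resources; the same claim would fail for a coupled (non-product) norm or if the two resources were linked through off-diagonal entries, which is worth flagging as the reason the construction was set up this way.
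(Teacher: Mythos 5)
Your proposal is correct and follows essentially the same route as the paper: exploit the block-diagonal structure of $U$, reduce each block to Lemma~\ref{lem:contraction}, and lift through the product $\max$-norm. The only cosmetic difference is that for part (i) you cite Lemma~\ref{lem:contraction}(ii) for invariance of $W^1 \times W^2$, whereas the paper re-derives it directly from $\mathbf{e}^\top A^j = \mathbf{e}^\top$; the substance is identical.
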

\begin{proof}
\begin{itemize}
\item[(i)] Since $A^1 \in \mathcal{F}^1$ and $A^2 \in \mathcal{F}^2$ are non-negative column stochastic matrices; thus, $\mathbf{e}^\top A^1 = \mathbf{e}^\top A^2 = \mathbf{e}^\top$. Also, $\mathbf{e}^\top I = \mathbf{e}^\top$, for identity matrix $I \in \mathbb{R}^{n\times n}$. As $\mathbf{e}^\top \mathbf{z}^1_{t} = 0$, for $\mathbf{z}_t^1 \in \mathbb{R}^{n}$. Therefore, $\mathbf{e}^\top A^1 \mathbf{z}^1_{t} = \mathbf{e}^\top \mathbf{z}^1_{t} = 0$; analogously, as $\mathbf{e}^\top \mathbf{z}^2_{t} = 0$, for $\mathbf{z}_t^2 \in \mathbb{R}^{n}$, we have $\mathbf{e}^\top A^2 \mathbf{z}^2_{t} = \mathbf{e}^\top \mathbf{z}^2_{t} = 0$, for $t=1,2,\ldots,T$. The proof is a consequence of these.

\item[(ii)]
From Lemma \ref{lem:contraction}, we know that $D^1 \in \mathcal{Q}^1$ and $D^2 \in \mathcal{Q}^2$ are non-expansive with respect to the norm $\norm{.}_T$. Thus, without loss of generality, let us assume that the capacity event of resource $\mathcal{R}^1$ occurs, then the matrix $D^1 \in \mathcal{Q}^1$ of $\mathcal{R}^1$ will be active to update its variables, but the variables of resource $\mathcal{R}^2$ will remain unchanged. Therefore, we get the following norm, 
\begin{align} \label{non_exp}
\norm{U \mathbf{z}} &= \norm{
   \begin{bmatrix}
   D^1 & 0   \\
   0 & I\\
\end{bmatrix}
\begin{bmatrix}
  \mathbf{z}^1  \\
  \mathbf{z}^2
\end{bmatrix}
 }
=
\norm{  \begin{bmatrix}
    D^1 \mathbf{z}^1\\
\mathbf{z}^2
\end{bmatrix} 
 } \nonumber\\
 &= \max \{ \norm{D^1 \mathbf{z}^1}_T, \norm{\mathbf{z}^2}_T \}.
\end{align}
From Lemma \ref{lem:contraction}, we write $\norm{D^1 \mathbf{z}^1}_T \leq \norm{\mathbf{z}^1}_T$. Placing it in \eqref{non_exp} and by definition of $\norm{\cdot}$, we get $\norm{U \mathbf{z}} \leq  \norm{\mathbf{z}}$.
 \qedhere 
 \end{itemize} 
\end{proof}
We use the following results to show the properties (non-expansive or contraction) of product of $U \in \mathcal{S}$ matrices.
\begin{lem} \label{lem:prod_stoc} 
For $\ell \in \mathbb{N}_+$, and $j=1,2$, let $A^j \in \mathcal{F}^j$ (cf. \eqref{def:matrix_A}) is a non-negative column stochastic matrix, then $(A^j)^{\ell}$ is also a non-negative column stochastic matrix.
\end{lem}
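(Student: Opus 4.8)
The plan is to prove both claimed properties---entrywise non-negativity and column stochasticity---by induction on $\ell$, using the standard characterization that a non-negative matrix $M \in \mathbb{R}^{n \times n}$ is column stochastic if and only if $\mathbf{e}^\top M = \mathbf{e}^\top$, where $\mathbf{e}^\top = [1 \ \ldots \ 1]$ is the all-ones row vector of length $n$. Fix $j \in \{1,2\}$ and abbreviate $A = A^j \in \mathcal{F}^j$. The base case $\ell = 1$ is immediate, since $A$ is a non-negative column stochastic matrix by its construction in \eqref{def:matrix_A}.

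For the non-negativity part of the induction step, I would use that the product of two entrywise non-negative matrices is again entrywise non-negative, because every entry of a matrix product is a finite sum of products of non-negative reals. Taking as inductive hypothesis that $A^{\ell-1}$ is non-negative, the factorization $A^\ell = A \cdot A^{\ell-1}$ then immediately yields that $A^\ell$ is non-negative.

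For the column-stochastic part, the key identity is $\mathbf{e}^\top A = \mathbf{e}^\top$, which is exactly the statement that each column of $A$ sums to one. Assuming $\mathbf{e}^\top A^{\ell-1} = \mathbf{e}^\top$ as inductive hypothesis, I would compute $\mathbf{e}^\top A^\ell = (\mathbf{e}^\top A^{\ell-1}) A = \mathbf{e}^\top A = \mathbf{e}^\top$ using associativity of matrix multiplication, so each column of $A^\ell$ sums to one. Combined with the non-negativity established above, this shows $A^\ell$ is a non-negative column stochastic matrix and closes the induction.

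There is no substantive obstacle in this lemma: it is the elementary fact that the set of non-negative column stochastic matrices is closed under multiplication, specialized to powers of a single matrix, and the argument is driven entirely by the left-eigenvector identity $\mathbf{e}^\top A = \mathbf{e}^\top$. The only care needed is to state that characterization explicitly at the outset so that the inductive step reduces to a single associativity computation, and to remark that the argument is identical for $j=1$ and $j=2$ so that the case distinction can be suppressed.
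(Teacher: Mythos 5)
Your proposal is correct; note that the paper actually states this lemma without giving any proof at all, so there is nothing to compare against, but your argument---induction on $\ell$ using the left-eigenvector identity $\mathbf{e}^\top A = \mathbf{e}^\top$ for column stochasticity and closure of entrywise non-negative matrices under multiplication---is precisely the standard argument the paper implicitly relies on (it invokes the same identity $\mathbf{e}^\top A^j = \mathbf{e}^\top$ in the proof of Lemma \ref{lem:non-expansive}(i)). No gaps.
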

For $j=1,2$ and $\ell \in \mathbb{N}_+$, let 
\begin{align} \label{def:Xg}
X_g^j = \left\{ \begin{array}{ll}
A^j \\
B^j
\end{array}
\right., \mbox{ for } g \in \{1, 2, \ldots, \ell\},
\end{align} 
such that matrix $X_{g^*}^j = B^j$, where $g^* \in \{1, \ldots, \ell\}$.
Hence,  $X^j_\ell \ldots X^j_1$ contains at least one $B^j \in \mathcal{F}^j$ (cf. \eqref{def:matrix_B}) which is a column stochastic matrix with positive entries. Thus
\begin{corol} \label{cor_pos}
For $\ell \in \mathbb{N}_+$, $j=1,2$, and $g \in \{1,2,\ldots,\ell \}$, let $X_g^j$ is defined as \eqref{def:Xg} then $X^j_\ell \ldots X^j_2 X^j_1$ is a column stochastic matrix with positive entries.
\end{corol}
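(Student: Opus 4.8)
The plan is to combine two observations: a product of non-negative column stochastic matrices is again column stochastic, and the single guaranteed factor $B^j$ forces the entire product to be entrywise positive. Column stochasticity of the product is essentially immediate, so the real work lies in the positivity. The point that requires care—and what I expect to be the main obstacle—is that an individual $A^j \in \mathcal{F}^j$ need \emph{not} be positive: by \eqref{def:matrix_A}, whenever some agent does not back off ($\bm{\beta}^j_\ell = 1$) the $\ell$-th column of $A^j$ equals the canonical vector $\mathbf{e}_\ell$, so $A^j$ can carry many zeros. Hence positivity cannot be assumed for the $A^j$ factors; it must be generated by the $B^j$ factor and then propagated through the product.

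First I would record column stochasticity. Each $X^j_g$ is a non-negative column stochastic matrix, being either $A^j \in \mathcal{F}^j$ or $B^j \in \mathcal{F}^j$. If $M,N$ are non-negative column stochastic then so is $MN$, since $MN \geq 0$ and $\mathbf{e}^\top(MN) = (\mathbf{e}^\top M)N = \mathbf{e}^\top N = \mathbf{e}^\top$; this is exactly the computation behind Lemma \ref{lem:prod_stoc}, applied here to distinct factors. By induction $X^j_\ell \cdots X^j_1$ is non-negative column stochastic.

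Next I would extract two elementary positivity facts from the explicit forms \eqref{def:matrix_A} and \eqref{def:matrix_B}. First, every $X^j_g$ has strictly positive diagonal, because its $(i,i)$ entry equals $\bm{\beta}^j_i + \frac{1}{n}(1 - \bm{\beta}^j_i) \geq \beta^j > 0$; since $(MN)_{ii} = \sum_m M_{im}N_{mi} \geq M_{ii}N_{ii}$ for non-negative $M,N$, any product of such matrices again has positive diagonal, and in particular no zero row. Second, $B^j$ is entrywise positive, since its off-diagonal entries all equal $\frac{1}{n}(1 - \beta^j) > 0$.

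Finally I would split the product at the guaranteed back-off factor $X^j_{g^*} = B^j$, writing $X^j_\ell \cdots X^j_1 = L\,B^j\,R$ with $L = X^j_\ell \cdots X^j_{g^*+1}$ and $R = X^j_{g^*-1}\cdots X^j_1$ (either of which may be the identity). Since $R$ is column stochastic, each of its columns contains a positive entry, so $B^j R$ is entrywise positive: $(B^j R)_{ik} = \sum_m B^j_{im}R_{mk} > 0$, as every $B^j_{im} > 0$. Then each left-multiplication by a factor of $L$ preserves positivity, because each such factor has no zero row: if $Z$ is positive and $M$ non-negative with a positive entry $M_{i m_0}$ in row $i$, then $(MZ)_{ik} \geq M_{i m_0} Z_{m_0 k} > 0$. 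Applying this for $X^j_{g^*+1}, \ldots, X^j_\ell$ in succession shows $L\,B^j\,R$ is entrywise positive, which together with column stochasticity completes the proof. The crux, as flagged, is that positivity originates from $B^j$ and is transported outward using the ``no zero row'' property of the surrounding factors.
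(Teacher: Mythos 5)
Your proof is correct, and it is worth noting that it supplies substantially more than the paper does: the paper's entire justification for Corollary \ref{cor_pos} is the one sentence preceding it, namely that the product ``contains at least one $B^j$ \dots which is a column stochastic matrix with positive entries,'' with no argument for why a single positive factor forces the whole product to be positive. You correctly identify that this is the non-trivial point: the remaining factors $A^j$ can have many zero entries (any agent with $\bm{\beta}^j_i=1$ contributes the column $\mathbf{e}_i$), and a product of a non-negative matrix with a positive matrix need not be positive in general --- left-multiplying a positive matrix by a non-negative matrix with a zero row produces a zero row. Your two supporting observations close exactly this gap: right-propagation of positivity only needs column stochasticity of $R$ (every column has a positive entry, so $B^jR>0$), while left-propagation needs the ``no zero row'' property of the $A^j$ factors, which you obtain from the strictly positive diagonal $\bm{\beta}^j_i + \tfrac{1}{n}(1-\bm{\beta}^j_i) \ge \beta^j > 0$. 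The column-stochasticity half of the claim matches the computation $\mathbf{e}^\top MN = \mathbf{e}^\top N = \mathbf{e}^\top$ implicit in Lemma \ref{lem:prod_stoc}. In short, your argument reaches the same conclusion by the same underlying idea (positivity originates at $B^j$), but it is a complete proof where the paper offers only an assertion, and the ``no zero row'' step is a genuine ingredient the paper leaves unstated.
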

 We obtain the following non-expansive and contraction properties of the product of column stochastic matrices. 
\begin{propos} \label{prop:prodA}
\begin{itemize} For $\ell \in \mathbb{N}_+$, and $j=1,2$,
\item[(i)] let $A^j \in \mathcal{F}^j$ (cf. \eqref{def:matrix_A}) then for $\mathbf{z}^j \in \mathbb{R}^n$, the norm $\norm{(A^j)^\ell \mathbf{z}^j}_1 \leq \norm{\mathbf{z}^j}_1$ holds. 
\item[(ii)] For $g \in \{1, 2, \ldots, \ell \}$, let $X_g^j$ is defined as \eqref{def:Xg}. Then for a non-zero vector $\mathbf{z}^j \in \mathbb{R}^n$, the  norm $\norm{X^j_\ell \ldots X^j_2 X^j_1 \mathbf{z}^j}_1 < \norm{\mathbf{z}^j}_1
$ holds.
\end{itemize}
\end{propos}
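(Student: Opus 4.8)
The plan is to reduce both parts to a single $\ell_1$ estimate for column-stochastic matrices, reusing the computation already carried out in the proof of Lemma~\ref{lem:contraction}(i). For part (i), I would first invoke Lemma~\ref{lem:prod_stoc} to record that $(A^j)^\ell$ is itself a non-negative column-stochastic matrix. Writing $(A^j)^\ell = [p_{i\ell'}]$ with $p_{i\ell'} \geq 0$ and column sums $\sum_{i=1}^n p_{i\ell'} = 1$, the argument is then identical to \eqref{eq:l1-norm0}--\eqref{eq:l1-norm}: expand $\norm{(A^j)^\ell \mathbf{z}^j}_1 = \sum_{i=1}^n \vert \sum_{\ell'=1}^n p_{i\ell'} z^j_{\ell'} \vert$, apply the triangle inequality inside each absolute value, interchange the two summations, and collapse the bound using column-stochasticity $\sum_{i=1}^n p_{i\ell'} = 1$ to reach $\sum_{\ell'=1}^n \vert z^j_{\ell'} \vert = \norm{\mathbf{z}^j}_1$. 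This yields the non-expansive inequality and requires no new ideas.

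For part (ii), the aim is to promote the non-strict bound to a strict one. Here I would appeal to Corollary~\ref{cor_pos}, which guarantees that the product $P := X^j_\ell \ldots X^j_2 X^j_1$ is column-stochastic with strictly positive entries, the positivity being forced by the presence of at least one factor $B^j$ (cf. \eqref{def:matrix_B}). Writing $P = [p_{i\ell'}]$ with every $p_{i\ell'} > 0$, I would again begin from $\norm{P \mathbf{z}^j}_1 = \sum_{i=1}^n \vert \sum_{\ell'=1}^n p_{i\ell'} z^j_{\ell'} \vert$ and run the same triangle-inequality chain as in part (i), but now exploit strict positivity of the entries: for each row $i$, equality in $\vert \sum_{\ell'} p_{i\ell'} z^j_{\ell'} \vert \leq \sum_{\ell'} p_{i\ell'} \vert z^j_{\ell'} \vert$ can hold only when the summands $p_{i\ell'} z^j_{\ell'}$ all carry a common sign. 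I would track this equality condition across all rows and combine it with $\sum_{i=1}^n p_{i\ell'} = 1$ to conclude that for a non-zero $\mathbf{z}^j$ the aggregated bound is strict, giving $\norm{P \mathbf{z}^j}_1 < \norm{\mathbf{z}^j}_1$.

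The main obstacle is precisely this strictness step. The non-strict estimate of part (i) carries over verbatim once $P$ is known to be column-stochastic, so the only new content lies in turning $\leq$ into $<$, and that is where the argument must use more than column-stochasticity. The leverage is the strict positivity of every entry $p_{i\ell'}$ supplied by Corollary~\ref{cor_pos}: with all $p_{i\ell'} > 0$, the triangle inequality inside each $\vert \sum_{\ell'} p_{i\ell'} z^j_{\ell'} \vert$ is strict unless the terms line up in sign, so the delicate point is the row-by-row bookkeeping of the equality case together with the structural fact that $P$ mixes all $n$ coordinates through the rank-one term $\tfrac{1}{n}\mathbf{e}(\cdot)^\top$ common to the factors (cf. \eqref{def:matrix_A}, \eqref{def:matrix_B}). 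Pushing the positivity-driven strict triangle inequality through uniformly over all rows and assembling it into the final strict bound is the part I expect to demand the most care.
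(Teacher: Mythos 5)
Your part (i) is correct and is exactly the filling-in that the paper's one-line proof (``easy consequence of Lemma~\ref{lem:prod_stoc} and Corollary~\ref{cor_pos}'') intends: once Lemma~\ref{lem:prod_stoc} gives that $(A^j)^\ell$ is column stochastic, the chain \eqref{eq:l1-norm0}--\eqref{eq:l1-norm} from Lemma~\ref{lem:contraction}(i) applies verbatim. No issue there.

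Part (ii), however, has a genuine gap, and the obstacle you flag as ``delicate bookkeeping'' is in fact fatal to the statement as written: for an arbitrary non-zero $\mathbf{z}^j \in \mathbb{R}^n$ the strict inequality is simply false. Take $\mathbf{z}^j = \mathbf{e}$, or any non-negative non-zero vector. Since $P = X^j_\ell \cdots X^j_1$ is column stochastic (with positive entries), $P\mathbf{z}^j \geq 0$, so $\norm{P\mathbf{z}^j}_1 = \mathbf{e}^\top P\, \mathbf{z}^j = \mathbf{e}^\top \mathbf{z}^j = \norm{\mathbf{z}^j}_1$ --- equality, not contraction. Consequently your plan of ``tracking the equality condition across all rows'' cannot close: the equality case of the triangle inequality is genuinely attained by every sign-constant vector, and neither your hypotheses nor the proposition's (the paper's statement shares this defect) exclude such vectors. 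The strict contraction holds only for vectors whose entries take both signs, in particular on the zero-sum subspace $\{\mathbf{z} \in \mathbb{R}^n \,\vert\, \mathbf{e}^\top \mathbf{z} = 0\}$, which is precisely where the paper actually applies the proposition (Lemma~\ref{lem:prod_cont} acts on the subspace $W^j$ of Lemma~\ref{lem:contraction}(ii), whose blocks satisfy $\mathbf{e}^\top \mathbf{z}^j_t = 0$). With that hypothesis added, your argument completes immediately and cleanly: a non-zero zero-sum vector has both a strictly positive and a strictly negative entry, so by strict positivity of all $p_{i\ell'}$ (Corollary~\ref{cor_pos}) every row sum $\sum_{\ell'} p_{i\ell'} z^j_{\ell'}$ mixes terms of opposite signs, the triangle inequality is strict in \emph{every} row (no case analysis over rows is needed), and summing over $i$ with $\sum_{i} p_{i\ell'} = 1$ yields $\norm{P\mathbf{z}^j}_1 < \norm{\mathbf{z}^j}_1$. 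The rank-one mixing structure $\tfrac{1}{n}\mathbf{e}(\cdot)^\top$ you invoke is not needed beyond the entrywise positivity it helps guarantee. In short: restrict to $\mathbf{e}^\top \mathbf{z}^j = 0$ (as the downstream lemmas do) and your proof is complete; without that restriction no proof exists, because the claim fails.
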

\begin{proof}
 \begin{itemize}
 Proof is an easy consequence of Lemma \ref{lem:prod_stoc} and Corollary \ref{cor_pos}. \qedhere
\end{itemize}
\end{proof}
%
Now, for $j=1,2$, let $k \in \mathbb{N}$ and $t_k \in K^j$, and suppose that $k+1$ capacity events of resource $\mathcal{R}^j$ occur between two chosen time instants. For $j=1,2$, and $k \in \mathbb{N}$, we calculate the product of $k+1$ matrices $D^j \in \mathcal{Q}^j$ in \eqref{mat_Dk}.
%
\begin{align} \label{mat_Dk}
(D^j)^{k+1} = \begin{bmatrix}
  (A^j)^{k+1} & 0 & 0  &  &  &\ldots &   &   0 \\
  \frac{1}{2} \big( (A^j)^{k+1} + (A^j)^{k} \big)  &  &  &  &  & & \\
\vdots  & \vdots & \vdots  &  & & & &  \vdots\\
\frac{1}{k+2} \big((A^j)^{k+1} + (A^j)^{k} + \ldots + A^j +I \big)  & 0  &  & \ldots &  & \ldots & &    0 \\
\frac{1}{k+3} \big((A^j)^{k+1} + (A^j)^{k} + \ldots + A^j \big)  & \frac{2}{k+3}I & 0 & \ldots  &  & \ldots &   &   \vdots\\
& 0& \ddots &  & & &  &  \\
   \vdots & \vdots &\vdots  &   \ddots &   &  &    & \vdots\\
\frac{1}{T} \big((A^j)^{k+1} + (A^j)^{k} + \ldots + A^j \big)  &  0& 0 & \ldots & \frac{T-r-1}{T}I &0 & \ldots & 0
\end{bmatrix}.
\end{align}
%
Analogously, suppose that if $k+1$ capacity events of resource $\mathcal{R}^j$ occur, during which at least once all the agents back-off then there exists at least one matrix $E^j \in \mathcal{Q}^j$ in the product term; recall that matrix $E^j \in \mathcal{Q}^j$ consists of $B^j \in \mathcal{F}^j$ (cf. \eqref{def:matrix_B}). For $j=1,2$, let
\begin{align} \label{def:Mg}
M_g^j = \left\{ \begin{array}{ll}
D^j \\
E^j
\end{array}
\right., \mbox{ for } g \in \{1, 2, \ldots, k+1 \},
\end{align}
 and there exists at least one $g^* \in \{ 1, \ldots, k+1 \}$ such that $M_{g^*}^j = E^j$, then the product of $k+1$ such matrices is written as $M_{k+1}^j \ldots M_1^j$.
 Furthermore, let $X^j_g$ is defined as \eqref{def:Xg} with the same indexes $g$ and $g^*$ as in \eqref{def:Mg}. Then, for $t_k \in K^j$, and $j = 1, 2$, we obtain the product $M_{k+1}^j \ldots M_1^j$, by replacing $(A^j)^\ell$ with $X_{\ell}^j \ldots X_1^j$ in the definition of the matrix $(D^j)^{k+1}$ (cf. \eqref{mat_Dk}), where $\ell \in \{1,\ldots, k+1\}$.
We get the following results.
\begin{lem} \label{lem:prod_cont}
For $j=1,2$, let $k \in \mathbb{N}$ be a fixed value and $t_k \in K^j$. Furthermore, let $W^j$ be an invariant subspace under matrix $(D^j)^{k+1}$ (cf. \eqref{mat_Dk}). Then, 
\begin{itemize}
\item[(i)] $\norm{(D^j)^{k+1} \mathbf{z}^j}_T \leq \norm{\mathbf{z}^j}_T$, for all $\mathbf{z}^j \in W^j$.
\item[(ii)] $\norm{M_{k+1}^j \ldots M_1^j \mathbf{z}^j}_T < \norm{\mathbf{z}^j}_T$, for all non-zero $\mathbf{z}^j \in W^j$, where $M_g^j$ is defined as \eqref{def:Mg}, and $g \in \{1, \ldots, k+1 \}$.
\end{itemize}
\end{lem}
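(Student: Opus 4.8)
The plan is to reduce both statements to the one-step results of Lemma~\ref{lem:contraction} by exploiting that $(D^j)^{k+1}$ and $M_{k+1}^j\cdots M_1^j$ are \emph{products} of matrices from $\mathcal{Q}^j$, together with the fact that the subspace $W^j$ is invariant under each factor.

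For part (i), I would simply invoke submultiplicativity of non-expansive maps. By the hypothesis preceding \eqref{mat_Dk}, the matrix $(D^j)^{k+1}$ is a product of $k+1$ matrices drawn from $\mathcal{Q}^j$, and by Lemma~\ref{lem:contraction}(i) every factor satisfies $\norm{D^j\mathbf{z}^j}_T\le\norm{\mathbf{z}^j}_T$ on all of $\mathbb{R}^{Tn}$. Composing these bounds $k+1$ times gives $\norm{(D^j)^{k+1}\mathbf{z}^j}_T\le\norm{\mathbf{z}^j}_T$ for every $\mathbf{z}^j$, in particular for $\mathbf{z}^j\in W^j$, so no use of the explicit block form is needed. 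Alternatively, one can repeat the convexity-plus-triangle-inequality computation of Lemma~\ref{lem:contraction}(i) directly on the block structure of \eqref{mat_Dk}, replacing each bound $\norm{A^j\mathbf{z}_1^j}_1\le\norm{\mathbf{z}_1^j}_1$ by the product bound $\norm{(A^j)^\ell\mathbf{z}_1^j}_1\le\norm{\mathbf{z}_1^j}_1$ from Proposition~\ref{prop:prodA}(i), since every block row of \eqref{mat_Dk} is a convex combination whose coefficients sum to one.

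For part (ii), the point is to upgrade one $\le$ to a strict $<$ using the guaranteed back-off factor. By \eqref{def:Mg} there is an index $g^*$ with $M_{g^*}^j=E^j$, so I would factor the product as $M_{k+1}^j\cdots M_1^j = P_{\mathrm{post}}\,E^j\,P_{\mathrm{pre}}$, where $P_{\mathrm{pre}}=M_{g^*-1}^j\cdots M_1^j$ and $P_{\mathrm{post}}=M_{k+1}^j\cdots M_{g^*+1}^j$ are each products of matrices in $\mathcal{Q}^j$. By Lemma~\ref{lem:contraction}(ii) every factor leaves $W^j$ invariant, so $\mathbf{v}:=P_{\mathrm{pre}}\mathbf{z}^j\in W^j$, and by Lemma~\ref{lem:contraction}(i) both $P_{\mathrm{pre}}$ and $P_{\mathrm{post}}$ are non-expansive. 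If $\mathbf{v}=0$ the conclusion is immediate, since then $M_{k+1}^j\cdots M_1^j\mathbf{z}^j=0$ while $\norm{\mathbf{z}^j}_T>0$. If $\mathbf{v}\neq 0$, Lemma~\ref{lem:contraction}(iii) gives the strict bound $\norm{E^j\mathbf{v}}_T<\norm{\mathbf{v}}_T$, and chaining with non-expansiveness yields
\begin{align*}
\norm{M_{k+1}^j\cdots M_1^j\mathbf{z}^j}_T \le \norm{E^j\mathbf{v}}_T < \norm{\mathbf{v}}_T = \norm{P_{\mathrm{pre}}\mathbf{z}^j}_T \le \norm{\mathbf{z}^j}_T,
\end{align*}
which is the desired strict contraction.

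I expect the main obstacle to be the strict inequality rather than the non-expansiveness: one must ensure that the single contracting factor $E^j$ is not ``wasted'' by the surrounding non-expansive maps. The factoring argument handles this because non-expansive maps can never \emph{increase} the norm, and the only degenerate case, where an intermediate image collapses to $0$, is even more strongly contracting. If one instead prefers to argue directly from the block form \eqref{mat_Dk} with $(A^j)^\ell$ replaced by $X_\ell^j\cdots X_1^j$ (as in \eqref{def:Xg}) and Proposition~\ref{prop:prodA}(ii), the delicate point becomes the case $\mathbf{z}_1^j=0$: there the first block column contributes nothing, so strictness must instead be extracted from the fact that every scaled-identity coefficient appearing in \eqref{mat_Dk} is strictly less than one, which again forces each block row to have $\ell_1$-norm strictly below $\max_t\norm{\mathbf{z}_t^j}_1$.
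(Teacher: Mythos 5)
Your proof is correct, but it is organized differently from the paper's. For part (i) the paper re-derives the bound from the explicit block form \eqref{mat_Dk}: it expands $\norm{(D^j)^{k+1}\mathbf{z}^j}_T$ row by row, applies the triangle inequality and Proposition~\ref{prop:prodA}(i) to each $(A^j)^\ell\mathbf{z}_1^j$ term, and then bounds the resulting convex combinations by $\max_t\norm{\mathbf{z}_t^j}_1$. Your one-line submultiplicativity argument (each factor lies in $\mathcal{Q}^j$ and is non-expansive by Lemma~\ref{lem:contraction}(i), so the product is) reaches the same conclusion without touching the block structure, and is the cleaner route. For part (ii) the divergence is more substantive: the paper again expands the product $M_{k+1}^j\cdots M_1^j$ into block form and obtains strictness by applying Proposition~\ref{prop:prodA}(ii) to the first block column, i.e. to $X_{k+1}^j\cdots X_1^j\mathbf{z}_1^j$; as you observe, this step only yields a strict inequality when $\mathbf{z}_1^j\neq 0$, and the paper does not treat the case $\mathbf{z}_1^j=0$ with $\mathbf{z}^j\neq 0$ (where strictness must instead come from the coefficients $\tfrac{T-k-1}{T}<1$ multiplying the remaining blocks, or from a separate argument). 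Your factorization $M_{k+1}^j\cdots M_1^j=P_{\mathrm{post}}\,E^j\,P_{\mathrm{pre}}$, combined with invariance of $W^j$ under each factor, Lemma~\ref{lem:contraction}(iii) applied to $\mathbf{v}=P_{\mathrm{pre}}\mathbf{z}^j$, and the explicit treatment of the degenerate case $\mathbf{v}=0$, closes this gap and is the more robust argument; its only cost is that it leans on Lemma~\ref{lem:contraction}(iii) for the full matrix $E^j\in\mathcal{Q}^j$ rather than on the scalar-level Proposition~\ref{prop:prodA}(ii), which is exactly the tool the paper defers to \cite{Wirth2014}.
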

\begin{proof}
\begin{itemize}
\item[(i)] 
For $j=1,2$, from \eqref{mat_Dk}, with little algebraic manipulation, we obtain the norm $\norm{(D^j)^{k+1} \mathbf{z}^j}_T$ as follows.
\begin{align*}
\norm{(D^j)^{k+1} \mathbf{z}^j}_T = \norm{\begin{bmatrix}
  (A^j)^{k+1} \mathbf{z}_1^j \\
  \frac{1}{2} \big( (A^j)^{k+1} + (A^j)^{k} \big) \mathbf{z}_1^j\\
\vdots\\
\frac{1}{k+2} \big((A^j)^{k+1} + (A^j)^{k} + \ldots + A^j +I \big) \mathbf{z}^j_1 \\
\frac{1}{k+3} \big((A^j)^{k+1} + (A^j)^{k} + \ldots + A^j \big) \mathbf{z}^j_1   + \frac{2}{k+3} \mathbf{z}^j_2\\
  \vdots \\
\frac{1}{T} \big((A^j)^{k+1} + (A^j)^{k} + \ldots + A^j \big) \mathbf{z}^j_1 + \frac{T-k-1}{T} \mathbf{z}^j_{T-k-1}
\end{bmatrix}
}_T. 
\end{align*}
Therefore, for all $j$, by definition of the norm $\norm{\cdot}_T$ and using the fact that, for $a_1 \in \mathbb{R}^n$ and $a_2 \in \mathbb{R}^n$, the norm $\norm{a_1 + a_2}_1 \leq \norm{a_1}_1 + \norm{a_2}_1$, we get the following result
\begin{align} \label{prod_mat_norm}
 \norm{(D^j)^{k+1} \mathbf{z}^j}_T \leq & \max \Bigg \{ \norm{(A^j)^{k+1} \mathbf{z}_1^j}_1, \frac{1}{2} \norm{ (A^j)^{k+1} \mathbf{z}_1^j}_1 + \nonumber \\ & \frac{1}{2} \norm{ (A^j)^{k} \mathbf{z}_1^j}_1, \ldots, \nonumber \\ & \frac{1}{T} \norm{(A^j)^{k+1} \mathbf{z}_1^j}_1 + \frac{1}{T} \norm{(A^j)^{k} \mathbf{z}_1^j}_1 + \ldots +  \nonumber \\ & \hspace{0.25in} \frac{1}{T} \norm{A^j \mathbf{z}_1^j}_1 + \frac{T-k-1}{T} \norm{ \mathbf{z}^j_{T-k-1}}_1 \Bigg \}. 
\end{align}
From Proposition \ref{prop:prodA}, we write that $\norm{(A^j)^{\ell} \mathbf{z}^j_1}_1 \leq \norm{\mathbf{z}^j_1}_1$, for $\ell \in \mathbb{N}_+$, where $\mathbf{z}^j_1 \in \mathbb{R}^n$, and $j=1,2$. Thus, for $j=1,2$, we rewrite \eqref{prod_mat_norm} as follows,
\begin{align} \label{prod_mat_norm2}
 \norm{(D^j)^{k+1} \mathbf{z}^j}_T \leq & \max \Bigg \{ \norm{\mathbf{z}_1^j}_1, \frac{1}{2} \Big( \norm{\mathbf{z}_1^j}_1 + \norm{\mathbf{z}_1^j}_1 \Big), \ldots,  \nonumber \\ & \frac{1}{T} \Big( (k+1)\norm{\mathbf{z}_1^j}_1 +  (T-k-1) \norm{\mathbf{z}^j_{T-k-1}}_1 \Big) \Bigg \}. 
\end{align}
Without loss of generality, suppose that $\norm{\mathbf{z}_i^j}_1$ is the maximum value among all $\norm{\mathbf{z}_t^j}_1$, where $t=1, 2, \ldots, i-1, i+1, \ldots, T$. Hence, we rewrite  \eqref{prod_mat_norm2} as follows,
\begin{align} \label{prod_mat_norm21}
 \norm{(D^j)^{k+1} \mathbf{z}^j}_T \leq \norm{\mathbf{z}_i^j}_1, \quad \text{for } j=1,2. 
\end{align}
Thus, by definition of the norm $\norm{\mathbf{z}^j}_T$, we write that $\norm{\mathbf{z}^j}_T = \norm{\mathbf{z}^j_i}_1$. After replacing this in \eqref{prod_mat_norm21}, we obtain the following result
\begin{align*}
 \norm{(D^j)^{k+1} \mathbf{z}^j}_T \leq \norm{\mathbf{z}^j}_T, \quad \text{for } j=1,2. 
\end{align*}
\item[(ii)] For $j=1,2$, we obtain the norm $\norm{M_{k+1}^j \ldots M_2^j M_1^j \mathbf{z}^j}_T$ as follows.  
\begin{align} \label{prod_mat_norm31}
 \norm{M_{k+1}^j \ldots M_2^j M_1^j \mathbf{z}^j}_T \leq & \max \Bigg \{ \norm{X^j_{k+1} X^j_{k} \ldots X^j_1  \mathbf{z}_1^j}_1, \frac{1}{2} \norm{ X^j_{k+1} X^j_{k} \ldots X^j_1 \mathbf{z}_1^j}_1 + \nonumber \\ & \frac{1}{2} \norm{X^j_{k} X^j_{k-1} \ldots X^j_1 \mathbf{z}_1^j}_1,  \ldots, \frac{1}{T} \norm{X^j_{k+1} X^j_{k} \ldots X^j_1 \mathbf{z}_1^j}_1 + \nonumber \\ & \frac{1}{T} \norm{X^j_{k} X^j_{k-1} \ldots X^j_1 \mathbf{z}_1^j}_1 +  \ldots + \nonumber \\ & \frac{1}{T} \norm{X_1^j \mathbf{z}_1^j}_1 + \frac{T-k-1}{T} \norm{\mathbf{z}^j_{T-k-1}}_1 \Bigg \}. 
\end{align}
Since, for $j=1,2$, the product $X^j_{k+1} X^j_{k} \ldots X^j_1$ consists of at least one $B^j \in \mathcal{F}^j$ which has positive entries, then from Proposition \ref{prop:prodA}, we write that $\norm{X^j_{k+1} X^j_{k} \ldots X^j_1 \mathbf{z}^j_1}_1 < \norm{\mathbf{z}^j_1}_1$, where $\mathbf{z}^j_1 \in \mathbb{R}^n$ is a non-zero vector.  Therefore, for  $j=1,2$, we have
\begin{align} \label{prod_mat_norm4_0} 
 \norm{M_{k+1}^j \ldots M_2^j M_1^j \mathbf{z}^j}_T < &\max \Bigg \{ \norm{\mathbf{z}_1^j}_1, \frac{1}{k+3} \Big ((k+1)\norm{\mathbf{z}_1^j}_1 + 2 \norm{\mathbf{z}_2^j}_1 \Big ), \ldots, \nonumber \\ &\frac{1}{T} \Big(  (k+1)\norm{\mathbf{z}_1^j}_1 + (T-k-1) \norm{\mathbf{z}_{T-k-1}^j}_1 \Big) \Bigg \}. 
\end{align}
Now, similar to part (i), we suppose that without loss of generality $\norm{\mathbf{z}_i^j}_1$ is the maximum value among all $\norm{\mathbf{z}_t^j}_1$, where $t=1,2,\ldots,i-1,i+1,\ldots,T$. Then, by definition of the norm $\norm{\cdot}_T$, we write $\norm{\mathbf{z}^j}_T = \norm{\mathbf{z}_i^j}_1$, for $j=1,2$. After replacing this in \eqref{prod_mat_norm4_0}, we get 
\begin{align*}
 \norm{M_{k+1}^j \ldots M_2^j M_1^j \mathbf{z}^j}_T < \norm{\mathbf{z}^j}_T, \quad \text{for } j=1,2. 
\end{align*} 
\end{itemize} \qedhere
\end{proof}
 Now, we proceed as follows to show that the product of matrices $U \in \mathcal{S}$ (cf. \eqref{def:mat_U}) is non-expansive. For $v \in \mathbb{N}$, let $t_{v} \in K$ be the (combined) capacity event time-instant. Furthermore, let us choose $k \in \mathbb{N}$, where $t_k \in K$, and let matrix $H_{k}$ denote the product of $k+1$ matrices $U \in \mathcal{S}$. Then starting from the $v^{\mathrm{\mathrm{th}}}$ capacity event time-instant $t_v$, we write that 
\begin{align} \label{mat_H_0}
\mathbf{H}_{k}(v+k+1) & = \mathbf{U}(v+k+1) \ldots \mathbf{U}(v).
\end{align}
Thus, for $k \in \mathbb{N}$ and $T \leq k+1$, we rewrite \eqref{Markov2} as follows,
\begin{align} \label{Markov_H}
\bm{\xi}(k+1) & = \mathbf{H}_{T-1}(k) \bm{\xi}(k-T+1).
\end{align}
Suppose that in $k+1$ capacity events, $k^1 \in \mathbb{N}$ capacity events of resource $\mathcal{R}^1$ occur, analogously, for $k^2 \in \mathbb{N}$, i.e.,  $t_k^1 \in K^1$; $t_k^2 \in K^2$; and $k^1 + k^2 = k+1$. Then we write the product of $k+1$ matrices $U \in \mathcal{S}$ (cf. \eqref{def:mat_U}) as
\begin{align} \label{mat_H}
H_{k} &= \begin{bmatrix}
		(D^1)^{k^1} & 0  \\
		0  & (D^2)^{k^2} 
\end{bmatrix}.
\end{align}
 For $j=1,2$, let $\Psi^j$ be the time between one capacity event of resource $\mathcal{R}^j$ when all agents back-off to the next capacity event. Recall that  $\alpha^j$ is the additive increase factor, $\beta^j$ is the multiplicative decrease factor, and $\mathcal{C}^j$ is the capacity of the resource $\mathcal{R}^j$; also, $n$ is the number of agents in the system. Then we calculate $\Psi^j$ as follows,
\begin{align*}
\Psi^j = \frac{(1-\beta^j)\mathcal{C}^j}{n \alpha^j}, \quad \text{ for } j=1,2.
\end{align*}
  Without loss of generality, we write that $0 < \Psi^1 \leq \Psi^2$. Let us choose $k \in \mathbb{N}_+$ such that $k \Psi^1 \geq \Psi^2$, then in $(k+1)$ capacity events, each resource has a capacity event with positive probability in which every agent backs-off.
Moreover, let in $k^1$ capacity events there occurs a capacity event with positive probability in which all the agents back-off for resource $\mathcal{R}^1$; analogously, for $k^2$. 
Now, let matrix $Y_{k}$ be the product of $k+1$ matrices $U \in \mathcal{S}$ (cf. \eqref{def:mat_U}) in which there exists a capacity event with positive probability for each resource when all the agents back-off. Thus, following the definition of $M_g^j$, for $g \in \{1,\ldots, k+1 \}$, as in \eqref{def:Mg}, the product $M_{k^j}^j \ldots M_1^j$ contains at least one $E^j \in \mathcal{Q}^j$ matrix, for all $j$. Hence, by definition of $U \in \mathcal{S}$, we write matrix $Y_{k}$ as 
\begin{align} \label{mat_Y}
Y_{k} &= \begin{bmatrix}
		M_{k^1}^1 M_{k^1-1}^1 \ldots M_1^1 & 0  \\
		0  & M_{k^2}^2 M_{k^2-1}^2 \ldots M_1^2 
\end{bmatrix}.
\end{align}
Now, let $\mathcal{V}$ be a finite set of the product of matrices $U \in \mathcal{S}$; furthermore, let $H \in \mathcal{V}$ denote the product of a finite number of $U \in \mathcal{S}$ matrices and $Y \in \mathcal{V}$ denote the product of a finite number of $U \in \mathcal{S}$ matrices where all agents back-off for both the resources. Then we have the following results.
\begin{lem} \label{lem:avg_contraction}
 \begin{itemize}
\item[(i)] Let $H \in \mathcal{V}$, and let $W$ be the subspace defined as 
\begin{align*}
W & \triangleq \{(\mathbf{z}^1, \mathbf{z}^2) \in \mathbb{R}^{2Tn} \vert \mathbf{e}^\top \mathbf{z}^1_{t} = \mathbf{e}^\top \mathbf{z}^2_{t} = 0, \\ &\text{ for } t=1,2,\ldots,T \}.
\end{align*}
 Then $W$ is an invariant subspace under all $H \in \mathcal{V}$.
\item[(ii)] For all $\mathbf{z} \in W$ and $H \in \mathcal{V}$, the norm $\norm{H \mathbf{z}} \leq \norm{\mathbf{z}}$ holds.
\item[(iii)] For non-zero $\mathbf{z} \in W$ and $Y \in \mathcal{V}$, the norm $\norm{Y \mathbf{z}} < \norm{\mathbf{z}}$ holds.
\end{itemize}
\end{lem}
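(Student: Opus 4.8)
The plan is to reduce everything to the per-resource statements already established, exploiting the block-diagonal structure of the products in $\mathcal{V}$. Recall from \eqref{mat_H} that any $H \in \mathcal{V}$ is block diagonal, with $(j,j)$-block a finite product of matrices from $\mathcal{Q}^j$, and from \eqref{mat_Y} that any $Y \in \mathcal{V}$ has the same shape but with each diagonal block $M_{k^j}^j \ldots M_1^j$ containing at least one back-off factor $E^j$. Crucially, the subspace $W$ splits as $W = W^1 \times W^2$ along exactly these blocks, and the norm decouples as $\norm{\mathbf{z}} = \max\{\norm{\mathbf{z}^1}_T, \norm{\mathbf{z}^2}_T\}$. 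Hence each of the three claims can be checked block-by-block and then recombined through the maximum.

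For (i), writing $\mathbf{z} = (\mathbf{z}^1, \mathbf{z}^2) \in W$, the block-diagonal form of $H$ sends $\mathbf{z}$ to the pair of block-products applied to $\mathbf{z}^1$ and $\mathbf{z}^2$ separately. By Lemma \ref{lem:contraction}(ii) each $D^j$ leaves $W^j$ invariant, so any finite product does as well; thus both components land in $W^j$ and $H\mathbf{z} \in W$. Equivalently, one may iterate Lemma \ref{lem:non-expansive}(i). For (ii), the decoupled norm gives $\norm{H\mathbf{z}} = \max_j \norm{(D^j)^{k^j}\mathbf{z}^j}_T$, and Lemma \ref{lem:prod_cont}(i) bounds each term by $\norm{\mathbf{z}^j}_T$; taking the maximum yields $\norm{H\mathbf{z}} \leq \norm{\mathbf{z}}$. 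Alternatively, one may iterate the single-step estimate of Lemma \ref{lem:non-expansive}(ii), using (i) to keep the intermediate iterates inside $W$.

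Part (iii) is the substantive one, and I expect the max-norm bookkeeping to be the main obstacle. For $Y \in \mathcal{V}$ each diagonal block contains a factor $E^j$ by construction (this is precisely what the choice of $k$ with $k\Psi^1 \geq \Psi^2$ secures: a full back-off for each resource occurs with positive probability), so Lemma \ref{lem:prod_cont}(ii) gives the strict per-block contraction $\norm{M_{k^j}^j \ldots M_1^j \mathbf{z}^j}_T < \norm{\mathbf{z}^j}_T$ for every non-zero $\mathbf{z}^j \in W^j$. The delicate step is upgrading these two coordinatewise strict inequalities to a single strict inequality for the maximum, since the max might a priori be attained at a block whose input vanishes. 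Fix a non-zero $\mathbf{z} = (\mathbf{z}^1, \mathbf{z}^2) \in W$ and assume without loss of generality $\norm{\mathbf{z}^1}_T \geq \norm{\mathbf{z}^2}_T$, so $\norm{\mathbf{z}} = \norm{\mathbf{z}^1}_T > 0$. If $\mathbf{z}^1 \neq 0$, the first block strictly contracts, $\norm{M^1_{k^1}\ldots M^1_1 \mathbf{z}^1}_T < \norm{\mathbf{z}^1}_T = \norm{\mathbf{z}}$, while the second satisfies $\norm{M^2_{k^2}\ldots M^2_1\mathbf{z}^2}_T \leq \norm{\mathbf{z}^2}_T \leq \norm{\mathbf{z}}$ (the first inequality being the strict contraction when $\mathbf{z}^2 \neq 0$ and trivial when $\mathbf{z}^2 = 0$), so the maximum lies strictly below $\norm{\mathbf{z}}$. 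The only other possibility, $\mathbf{z}^1 = 0$ and hence $\mathbf{z}^2 \neq 0$, is symmetric: the first block contributes $0$ and the second gives $\norm{M^2_{k^2}\ldots M^2_1 \mathbf{z}^2}_T < \norm{\mathbf{z}^2}_T = \norm{\mathbf{z}}$. In every case $\norm{Y\mathbf{z}} < \norm{\mathbf{z}}$, which is the desired strict contraction of $Y$ on $W$.
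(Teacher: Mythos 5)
Your proof is correct and follows essentially the same route as the paper: exploit the block-diagonal form of $H$ and $Y$, reduce each claim to the per-resource statements (Lemma \ref{lem:contraction}(ii) and Lemma \ref{lem:prod_cont}), and recombine via the maximum in the definition of $\norm{\cdot}$. Your case analysis in (iii) for the situation where one block's input vanishes is in fact slightly more careful than the paper's own argument, which invokes the strict per-block contraction without remarking that it only applies to non-zero $\mathbf{z}^j$.
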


\begin{proof} 
\begin{itemize}
	\item[(i)] Using the fact that for $j=1,2$, we have $\mathbf{e}^\top A^j = \mathbf{e}^\top$; proof is an easy consequence of this. 
\item[(ii)]  Let us choose fixed $k \in \mathbb{N}$ and $k^1, k^2 \in \mathbb{N}$, where $t_{k^1} \in K^1$ and $t_{k^2} \in K^2$, and $k^1+k^2=k+1$. Let matrices $H \in \mathcal{V}$ be the product of $U \in \mathcal{S}$ matrices for $k+1$ capacity events. Then, we write $H \in \mathcal{V}$ as \eqref{mat_H} and by definition of the norm $\norm{\cdot}$, we obtain 
\begin{align} \label{eq:norm_1}
 \norm{H \mathbf{z}} &= 
\norm{ \begin{bmatrix}
  (D^1)^{k^1} & 0  \\
  0  & (D^2)^{k^2}
\end{bmatrix} \begin{bmatrix}
  \mathbf{z}^1  \\
  \mathbf{z}^2
\end{bmatrix}
}. \nonumber\\
&= \max \Big \{\norm{ (D^1)^{k^1} \mathbf{z}^1 }_T, \norm{(D^2)^{k^2} \mathbf{z}^2}_T \Big\}.
  \end{align}
    From Lemma \ref{lem:prod_cont}, we write that $\norm{(D^j)^{k^j} \mathbf{z}^j }_T \leq \norm{\mathbf{z}^j}_T$, for $j=1, 2$; placing this value in \eqref{eq:norm_1}, we get
  \begin{align*}
\norm{H \mathbf{z}} \leq \max\{ \norm{\mathbf{z}^1}_T, \norm{\mathbf{z}^2}_T \}.
\end{align*}
Hence, by definition of $\norm{.}$, we have  
 $\norm{H \mathbf{z}} \leq \norm{\mathbf{z}}$. 
%
\item[(iii)]
Similar to (ii), we choose fixed $k \in \mathbb{N}$ and $k^1, k^2 \in \mathbb{N}$.  Now, suppose that in $k+1$ capacity events there exists a capacity event for each resource when all the agents back-off. Let the matrix $Y \in \mathcal{V}$ be the product of $k+1$ such matrices $U \in \mathcal{S}$. Furthermore, for $j=1,2$ and $g \in \{1,2,\ldots, k+1 \}$, the matrix $M_g^j$ is defined as \eqref{def:Mg}. Then, we write $Y \in \mathcal{V}$ as \eqref{mat_Y} to have
\begin{align*} 
 \norm{Y \mathbf{z}} &= 
\norm{ \begin{bmatrix}
  M_{k^1}^1 \ldots M_1^1 & 0  \\
  0  & M_{k^2}^2 \ldots M_1^2 
\end{bmatrix} \begin{bmatrix}
  \mathbf{z}^1  \\
  \mathbf{z}^2
\end{bmatrix}} \nonumber\\
&= \max \Big \{ \norm{ M_{k^1}^1 \ldots M_1^1 \mathbf{z}^1 }_T, \norm{ M_{k^2}^2 \ldots M_1^2 \mathbf{z}^2}_T \Big \}.
  \end{align*}
    From Lemma \ref{lem:prod_cont} (ii) and by definition of $\norm{\cdot}$, we get
 $\norm{Y \mathbf{z}} < \norm{\mathbf{z}}$. \qedhere
 \end{itemize} 
\end{proof}
Let $p_{H}(z)$ be the probability of choosing a particular matrix $H \in \mathcal{V}$, and $p_{Y}(z)$ be the probability of choosing a particular matrix $Y \in \mathcal{V}$ from the set $\mathcal{V}$. Recall that $\Sigma^{T}$ is the simplex in $\mathbb{R}^{Tn}$.
 Then, we present the following results.
\begin{thm} [Unique invariant \cite{Barnsley1988}] \label{Th:Barnsley}
For all $i$ and $j$, let the probability distribution $\lambda_i^j(\cdot)$ be Lipschitz continuous. Let $W$ be the invariant subspace under all $H \in \mathcal{V}$. Additionally, let there exist $\mu<1$ and $\eta>0$ such that the following holds
\begin{itemize}
\item[(i)] for all $\mathbf{z},\mathbf{w} \in\Sigma^{T} \times \Sigma^{T}$,
\begin{align} \label{Th:cont_avg}
\sum_{H \in \mathcal{V}} p_{H}(\mathbf{z}) \frac{ \norm{H(\mathbf{z}-\mathbf{w}) }}{\norm{\mathbf{z}-\mathbf{w}}} < 1, \text{ and,}
\end{align}  
\item[(ii)] for all $\mathbf{z},\mathbf{w} \in\Sigma^{T} \times \Sigma^{T}$, 
\begin{align} \label{Th:prod_prob}
&\sum_{H \in \mathcal{V}, \norm{H(\mathbf{z}-\mathbf{w})} \leq \mu \norm{\mathbf{z}-\mathbf{w}}} p_{H}(\mathbf{z}) p_{H}(\mathbf{w}) \geq \eta^2. 
\end{align}
\end{itemize}
Then, the model has an attractive probability distribution, and it has a unique invariant measure. Here, $\mathbf{z} - \mathbf{w} \in W$. 
\end{thm}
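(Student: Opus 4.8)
The plan is to read the place-dependent Markov chain \eqref{Markov1} as an iterated function system (IFS) on the compact metric space $\big(\Sigma^{T}\big)^2$, whose maps are the finitely many linear products $H \in \mathcal{V}$ chosen according to the place-dependent probabilities $p_{H}(\cdot)$, and then to check that this IFS satisfies exactly the hypotheses of the invariant-measure theorem of Barnsley et al. \cite{Barnsley1988}. The structural point I would record first is the reduction to the subspace $W$: for any two states $\mathbf{z},\mathbf{w} \in \big(\Sigma^{T}\big)^2$ each simplex block obeys $\mathbf{e}^\top \mathbf{z}^j_t = \mathbf{e}^\top \mathbf{w}^j_t = 1$, so $\mathbf{e}^\top(\mathbf{z}^j_t - \mathbf{w}^j_t)=0$ for all $t$ and $j$, i.e. $\mathbf{z}-\mathbf{w}\in W$. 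This is precisely what makes the quantities $H(\mathbf{z}-\mathbf{w})$ appearing in \eqref{Th:cont_avg} and \eqref{Th:prod_prob} accessible to Lemma \ref{lem:avg_contraction}, whose non-expansive and contractive estimates hold on $W$ even though the maps $H$ are not contractive on the full ambient space.

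With this reduction in place, I would verify the remaining regularity needed to invoke \cite{Barnsley1988}. The maps $H\in\mathcal{V}$ are linear and finite in number, so continuity is immediate. For the selection probabilities, each $p_{H}(\cdot)$ is a finite product of factors $\lambda_i^j(\cdot)$ and $1-\lambda_i^j(\cdot)$ (cf. \eqref{prod_prob}); since these are assumed Lipschitz and valued in $[0,1]$, a product rule shows that $p_{H}(\cdot)$ is Lipschitz on the compact simplex. The two remaining hypotheses are supplied by the conditions of the statement: \eqref{Th:cont_avg} is the average-contractivity requirement, and \eqref{Th:prod_prob} is the positivity requirement that a uniform mass $\eta^2$ sit on the contracting products. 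I would note that in the present system these are underwritten by Lemma \ref{lem:avg_contraction}: part (ii) gives $\norm{H(\mathbf{z}-\mathbf{w})}\le\norm{\mathbf{z}-\mathbf{w}}$ for every $H$, while part (iii) gives the strict inequality $\norm{Y(\mathbf{z}-\mathbf{w})}<\norm{\mathbf{z}-\mathbf{w}}$ for the all-back-off products $Y$, and the choice $k\Psi^1\ge\Psi^2$ guarantees such a $Y$ is realized with strictly positive probability, so the convex combination in \eqref{Th:cont_avg} is a weighted average of ratios at most $1$ with positive weight on a ratio strictly below $1$.

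Having assembled compactness, finiteness, Lipschitz probabilities, and the average-contraction and positivity conditions with explicit constants $\mu<1$ and $\eta>0$, I would conclude directly from \cite{Barnsley1988} that the associated Markov operator is attractive and possesses a unique invariant measure $\pi^T$ on $\big(\Sigma^{T}\big)^2$. The hard part will be the passage from the pointwise, subspace-level estimates of Lemma \ref{lem:avg_contraction} to a single \emph{uniform} bound $\mu<1$ valid over the whole state space: the per-realization contraction factor of a back-off product $Y$ depends both on the particular back-off pattern and, through $p_{H}(\cdot)$, on the current averages, so one must combine compactness of $\big(\Sigma^{T}\big)^2$ with continuity to extract a single $\mu<1$ and to keep the probabilities bounded away from $0$ (in particular controlling the denominators $\overline{x}^j_i(k)$ in \eqref{prob_x}). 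Establishing this uniformity and the Lipschitz regularity of $p_{H}$, rather than the formal appeal to \cite{Barnsley1988}, is where the real effort lies.
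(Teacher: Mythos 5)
Theorem \ref{Th:Barnsley} is stated in the paper as an imported result: it is the general invariant-measure theorem of Barnsley et al.\ \cite{Barnsley1988} for iterated function systems with place-dependent probabilities, and the paper supplies no proof of it beyond the citation. The content of the statement is the \emph{implication}: given Lipschitz probabilities, the average-contraction condition \eqref{Th:cont_avg}, and the positivity condition \eqref{Th:prod_prob}, there exists an attractive, unique invariant measure. Your proposal does not prove that implication. You verify that the specific AIMD system satisfies the hypotheses and then ``conclude directly from \cite{Barnsley1988}'' --- but that is invoking the very theorem you were asked to prove, so as a proof of \emph{this} statement the argument is circular. A genuine proof would have to reproduce the Barnsley--Demko--Elton--Geronimo machinery: define the transfer operator on probability measures over the compact state space, show it is a contraction in a suitable metric on measures (e.g.\ the Hutchinson/Monge--Kantorovich metric) by combining average contractivity with the modulus of continuity of the $p_H$, and deduce existence, uniqueness, and attractivity of the fixed point. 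None of that appears in your proposal.

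That said, what you have written is not wasted: it is, in substance, the paper's proof of the \emph{subsequent} theorem (``Convergence to unique invariant measure''), where the hypotheses of Theorem \ref{Th:Barnsley} are checked for the AIMD system --- the bound $p_Y(\mathbf{z}) \geq \big(\lambda_{min}^1\big)^n\big(\lambda_{min}^2\big)^n > 0$, the derivation of \eqref{Th:cont_avg} from Lemma \ref{lem:avg_contraction}, and the choice $\eta = \big(\lambda_{min}^1\big)^n\big(\lambda_{min}^2\big)^n$. Your explicit observation that $\mathbf{e}^\top(\mathbf{z}^j_t - \mathbf{w}^j_t) = 0$ for $\mathbf{z}, \mathbf{w} \in \Sigma^T \times \Sigma^T$, so that $\mathbf{z}-\mathbf{w} \in W$ and the subspace-level estimates of Lemma \ref{lem:avg_contraction} apply, is a point the paper states only tersely (``Here, $\mathbf{z}-\mathbf{w}\in W$'') and is worth making explicit. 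But you should relabel your argument as a verification of the hypotheses for the application, not as a proof of the cited theorem itself.
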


\begin{thm} [Ergodic property \cite{Elton1987}] \label{th:ergodicity}
Let $\pi^T$ be an invariant probability distribution on $\Sigma^{T} \times \Sigma^{T}$. If there is a contraction on average \eqref{Th:cont_avg}, and probabilities are bounded away from zero and are Lipschitz continuous. Then, for every initial value $\bm{\xi}(0) \in \Sigma^{T} \times \Sigma^{T}$, the following (ergodic property) holds,
\begin{align} \label{erg_pro}
\lim_{k \to \infty} \frac{1}{k+1} \sum_{\ell=0}^k \bm{\xi}(\ell) =  \mathbb{E}(\pi^T), \text{ almost surely}.
\end{align}
\end{thm}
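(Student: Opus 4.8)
The plan is to recognize the recursion \eqref{Markov1} as an iterated function system with place-dependent probabilities acting on the compact state space $\Sigma^{T} \times \Sigma^{T}$, and then to invoke Elton's ergodic theorem \cite{Elton1987} by verifying its three standing hypotheses: (a) the generating maps are contractive on average, (b) the selection probabilities are Lipschitz continuous, and (c) these probabilities are uniformly bounded away from zero. Two of these are supplied almost verbatim by the hypotheses of the statement and by the earlier lemmas, so the real content of the proof is to check that they hold in the precise form Elton requires and to identify the limiting measure with $\mathbb{E}(\pi^{T})$.

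First I would set up the state space and the invariant subspace. Each matrix $U \in \mathcal{S}$ is block diagonal with one block a product of the column stochastic AIMD matrices $A^{j}$ (through $D^{j}$) and the other block the identity, so $U$ maps $\Sigma^{T} \times \Sigma^{T}$ into itself, and this set is compact. By Lemma \ref{lem:non-expansive}(i) and Lemma \ref{lem:avg_contraction}(i) the difference space $W = \{(\mathbf{z}^{1},\mathbf{z}^{2}) : \mathbf{e}^\top \mathbf{z}^{1}_{t} = \mathbf{e}^\top \mathbf{z}^{2}_{t} = 0\}$ is invariant under every $H \in \mathcal{V}$, which is exactly the space in which the differences $\mathbf{z} - \mathbf{w}$ appearing in \eqref{Th:cont_avg} live. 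This reduces the contractivity analysis to the action on $W$, where Lemma \ref{lem:avg_contraction}(ii)--(iii) already give non-expansiveness of a generic product $H$ and strict contraction for the special products $Y$ in which every agent backs off at least once for each resource.

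Next I would establish the three Elton hypotheses. For average contractivity it suffices that the contracting events $Y$ occur with positive probability within a block of $k+1$ capacity events; this is exactly the role of the choice of $k$ with $k \Psi^{1} \geq \Psi^{2}$, which guarantees that each resource experiences a full back-off event with positive probability, so that the expected contraction factor in \eqref{Th:cont_avg} is strictly below one. For Lipschitz continuity, I would use the product formula \eqref{prod_prob} together with the assumed Lipschitz continuity of each $\lambda_{i}^{j}(\cdot)$: a finite product of Lipschitz functions taking values in the bounded interval $(0,1)$ is again Lipschitz, and $p_{H}$ is a finite combination of such terms, hence Lipschitz. For the lower bound, since each $\lambda_{i}^{j}$ lies in $(0,1)$ on the compact simplex (this is precisely what the normalization factor $\Gamma^{j}$ enforces), it attains a strictly positive minimum, so the finitely many products $p_{H}$ are bounded below by some $\eta > 0$, giving condition \eqref{Th:prod_prob}.

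Finally, with these hypotheses verified, Elton's theorem applies directly and yields the almost sure convergence of the Cesàro averages $\frac{1}{k+1}\sum_{\ell=0}^{k} \bm{\xi}(\ell)$ to the space average $\mathbb{E}(\pi^{T})$, for every initial $\bm{\xi}(0)$, where $\pi^{T}$ is the unique invariant measure furnished by Theorem \ref{Th:Barnsley}. I expect the main obstacle to be upgrading the pointwise statements of Lemma \ref{lem:avg_contraction} into the uniform estimates Elton needs: specifically, obtaining a single contraction constant $\mu < 1$ and a single lower bound $\eta > 0$ valid over the whole compact space, and confirming that the strict contraction on the subspace $W$ translates into a genuine contraction on average in the metric used for the differences. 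Once that uniformity is secured, identifying the ergodic limit with $\mathbb{E}(\pi^{T})$ is immediate from uniqueness of the invariant measure.
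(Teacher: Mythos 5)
Your proposal does not actually prove the statement as written. Theorem \ref{th:ergodicity} is itself Elton's ergodic theorem, transplanted to the state space $\Sigma^{T}\times\Sigma^{T}$: it is a conditional assertion whose hypotheses --- contraction on average \eqref{Th:cont_avg}, probabilities Lipschitz continuous and bounded away from zero --- are \emph{assumed}, and whose conclusion is the almost-sure convergence of the Ces\`aro averages. Your argument spends nearly all of its effort verifying those hypotheses for the specific AIMD iterated function system (positive probability of the all-back-off products $Y$, Lipschitz continuity of $p_H$ via \eqref{prod_prob}, the uniform lower bound $\eta$) and then says that Elton's theorem applies directly. That is circular as a proof of \emph{this} theorem: you are invoking the very result you are asked to establish. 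The verification of the hypotheses is genuinely needed in the paper, but it is the content of part (ii) of the subsequent theorem (Convergence to unique invariant measure), not of Theorem \ref{th:ergodicity}.

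For the record, the paper supplies no proof of this statement at all --- it is imported wholesale from \cite{Elton1987}. A genuine proof would have to reproduce Elton's argument: set up the chain generated by the random products of matrices from $\mathcal{V}$ with place-dependent probabilities, show that average contractivity together with the Lipschitz and positivity conditions yields a unique attractive invariant measure (essentially the role of Theorem \ref{Th:Barnsley}), and then run a strong-law-of-large-numbers argument for the additive functional $\frac{1}{k+1}\sum_{\ell=0}^{k}\bm{\xi}(t_\ell)$ along trajectories, typically via a martingale decomposition for the associated transfer operator. None of that appears in your proposal. If your intent was to prove the paper's final convergence theorem rather than this quoted result, your outline is broadly on target and close to what the paper does there; but as a proof of Theorem \ref{th:ergodicity} itself it is missing the entire analytic core.
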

\begin{thm} [Convergence to unique invariant measure]
For all $i$ and $j$, let the back-off probability $\lambda_i^j(\cdot)$ be Lipschitz continuous and strictly increasing in $[0,1]$; also, let there exist $\lambda_{min}^j>0$ such that $\lambda_i^j(\cdot) \geq \lambda_{min}^j$. Then, 
\begin{itemize}
\item[(i)] for every time window $T \geq 1$, there exists a unique invariant measure $\pi^T$ on $\Sigma^{T} \times \Sigma^{T}$, and,
\item[(ii)] for every $\bm{\xi}(0) \in \Sigma^{T} \times \Sigma^{T}$, the ergodic property \eqref{erg_pro} holds.
\end{itemize}
\end{thm}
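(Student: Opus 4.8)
The plan is to obtain this theorem as a direct application of the two abstract results quoted above: Theorem~\ref{Th:Barnsley} will give part (i), and Theorem~\ref{th:ergodicity} will give part (ii). All of the structural (qualitative) work is already in place. Lemma~\ref{lem:avg_contraction} establishes that the subspace $W$ is invariant under every product $H \in \mathcal{V}$, that each such $H$ is non-expansive on $W$ with respect to $\norm{\cdot}$, and that the special products $Y \in \mathcal{V}$ in which all agents back off at least once for each resource are strict contractions on $W$. What remains is purely to convert these facts into the quantitative contraction-on-average estimate \eqref{Th:cont_avg}, the product-probability bound \eqref{Th:prod_prob}, and the regularity (Lipschitz continuity, boundedness away from zero) of the place-dependent probabilities.

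First I would record two consequences of the standing assumptions on $\lambda_i^j(\cdot)$. Since $\lambda_i^j \geq \lambda_{min}^j > 0$ and, by construction of the normalization factor $\Gamma^j$, the map $\lambda_i^j$ is continuous and takes values strictly inside $(0,1)$ on the compact domain $\Sigma^{T} \times \Sigma^{T}$, it attains a maximum $\lambda_{max}^j < 1$; hence both $\lambda_i^j$ and $1-\lambda_i^j$ are bounded away from zero uniformly. Because each place-dependent probability $p_U(\cdot)$, and therefore each product probability $p_H(\cdot)$, is a finite product of factors of the form $\lambda_i^j$ or $1-\lambda_i^j$ (cf.\ \eqref{prod_prob}), every $p_H$ is bounded away from zero on the domain and, being a finite product of uniformly bounded Lipschitz functions, is itself Lipschitz continuous. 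This supplies the ``bounded away from zero and Lipschitz'' hypothesis of Theorem~\ref{th:ergodicity} and the Lipschitz hypothesis of Theorem~\ref{Th:Barnsley}.

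Next I would fix the window length. Using $0 < \Psi^1 \leq \Psi^2$ and choosing $k \in \mathbb{N}_+$ with $k\Psi^1 \geq \Psi^2$ as in the discussion preceding \eqref{mat_Y}, within $k+1$ combined capacity events each resource admits, with positive probability, a capacity event in which all agents back off. The crucial realization is the product $Y_0 \in \mathcal{V}$ in which every agent backs off at every capacity event of both resources; its probability is a product of finitely many factors each $\geq \lambda_{min}^j$, so $p_{Y_0}(\cdot) \geq \eta$ for some $\eta > 0$ uniformly on $\Sigma^{T}\times\Sigma^{T}$. Because $\mathcal{V}$ is finite and each contracting product restricted to the finite-dimensional subspace $W$ has operator norm strictly below one, by Lemma~\ref{lem:avg_contraction}(iii) together with the fact that $\sup\{\norm{Y\mathbf{z}}/\norm{\mathbf{z}} : 0 \neq \mathbf{z} \in W\}$ is attained on the compact unit sphere of $W$, I can set $\mu \triangleq \max_{Y} \norm{Y}_W < 1$. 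For \eqref{Th:cont_avg} I would then split the sum over $\mathcal{V}$, using that each $H$ contributes ratio $\leq 1$ by non-expansiveness while $Y_0$ contributes ratio $\leq \mu$, and that $\sum_{H}p_H(\mathbf{z})=1$, to obtain
\begin{align*}
\sum_{H \in \mathcal{V}} p_H(\mathbf{z})\, \frac{\norm{H(\mathbf{z}-\mathbf{w})}}{\norm{\mathbf{z}-\mathbf{w}}} \;\leq\; 1 - (1-\mu)\, p_{Y_0}(\mathbf{z}) \;\leq\; 1 - (1-\mu)\,\eta \;<\; 1,
\end{align*}
uniformly in $\mathbf{z},\mathbf{w}$ with $\mathbf{z}-\mathbf{w} \in W$. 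For \eqref{Th:prod_prob} I would restrict the sum to the single index $H = Y_0$, which satisfies $\norm{Y_0(\mathbf{z}-\mathbf{w})} \leq \mu\,\norm{\mathbf{z}-\mathbf{w}}$, so the left-hand side is bounded below by $p_{Y_0}(\mathbf{z})\,p_{Y_0}(\mathbf{w}) \geq \eta^2$.

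With these estimates, Theorem~\ref{Th:Barnsley} yields part (i), the existence of the attractive and unique invariant measure $\pi^T$, and Theorem~\ref{th:ergodicity} yields part (ii), the ergodic limit \eqref{erg_pro}. The main obstacle I anticipate is not any single step but the \emph{uniformity} threading through all of them: producing one contraction constant $\mu<1$ and one probability floor $\eta>0$ valid simultaneously for every admissible pair $\mathbf{z},\mathbf{w}$. This uniformity rests on three ingredients that must be checked in concert---finiteness of $\mathcal{V}$, compactness of $\Sigma^{T}\times\Sigma^{T}$ so the continuous probabilities attain positive minima, and the window choice $k\Psi^1 \geq \Psi^2$ guaranteeing that a full back-off of both resources is available within the product. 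I would also take care to note, as asserted in Theorem~\ref{Th:Barnsley}, that differences of states in $\Sigma^{T}\times\Sigma^{T}$ indeed lie in $W$, which is what makes the operator-norm contraction on $W$ the relevant quantity.
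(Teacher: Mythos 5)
Your proposal is correct and follows essentially the same route as the paper's own proof: bound $p_{Y}$ below by $\big(\lambda_{min}^1\big)^n\big(\lambda_{min}^2\big)^n$, split the sum over $\mathcal{V}$ into the contracting product $Y$ and the remaining non-expansive products $H$, and then invoke Theorem~\ref{Th:Barnsley} and Theorem~\ref{th:ergodicity}. Your treatment is in fact slightly more careful than the paper's, since you extract a uniform contraction constant $\mu = \max_{Y}\norm{Y}_W < 1$ from finiteness of $\mathcal{V}$ and compactness of the unit sphere of $W$, yielding the uniform bound $1-(1-\mu)\eta < 1$, whereas the paper only records the pointwise strict inequality.
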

\begin{proof}
\begin{itemize}
\item[(i)]
Recall that $\mathcal{V}$ is a set of the product of a finite number of $U \in \mathcal{S}$ matrices. Also, matrices $Y \in \mathcal{V}$ are the product of a finite number of $U \in \mathcal{S}$ matrices where there exists a capacity event for each resource in which all the agents back-off. Furthermore, given that $\lambda_{min}^j>0$ and $\lambda_i^j(\cdot) \geq \lambda_{min}^j$, for all $i$ and $j$. Thus, we write that $p_{Y}(\mathbf{z}) \geq \big(\lambda_{min}^1 \big)^n \big(\lambda_{min}^2 \big)^n > 0$, for all $\mathbf{z} \in \Sigma^{T} \times \Sigma^{T}$; hence, the probability of matrix $Y \in \mathcal{V}$ stays away from zero. 
Now, for all $\mathbf{z}$, $\mathbf{w} \in \Sigma^{T} \times \Sigma^{T}$ we have 
\begin{align} \label{eq:sumH}
&\sum_{H \in \mathcal{V}} p_{H}(\mathbf{z}) \norm{H(\mathbf{z} - \mathbf{w}) } = p_{Y}(\mathbf{z}) \norm{Y (\mathbf{z}-\mathbf{w}) } + \nonumber \\ \quad &\sum_{H \in \mathcal{V} - \{Y\}} p_{H}(\mathbf{z}) \norm{H (\mathbf{z} - \mathbf{w})}.
\end{align}
For all $\mathbf{z}, \mathbf{w} \in \Sigma^{T} \times \Sigma^{T}$, placing the values from Lemma \ref{lem:avg_contraction} in \eqref{eq:sumH}, we get
\begin{align} \label{eq:contrac2}
 &\sum_{H \in \mathcal{V}} p_{H}(\mathbf{z}) \norm{H (\mathbf{z} - \mathbf{w}) } < p_{Y}(\mathbf{z}) \norm{\mathbf{z} - \mathbf{w}} + \nonumber \\ & \sum_{H \in \mathcal{V} - \{ Y \}} p_{H}(\mathbf{z}) \norm{ \mathbf{z} - \mathbf{w}}.
\end{align} 
Since $\sum_{H \in \mathcal{V}} p_{H}(\mathbf{z}) = 1$; thus, $p_{Y}(\mathbf{z}) =1- \sum_{H \in \mathcal{V}-\{Y\}} p_{H}(\mathbf{z})$. Now, with little algebraic manipulation, from \eqref{eq:contrac2}, for all $\mathbf{z}, \mathbf{w} \in \Sigma^{T} \times \Sigma^{T}$, we obtain the contraction on average \eqref{Th:cont_avg}.

Now, since $p_{Y}(\mathbf{z}) \geq \big(\lambda_{min}^1 \big)^n \big(\lambda_{min}^2 \big)^n$, for $\mathbf{z} \in\Sigma^{T} \times \Sigma^{T}$; then for $\mu < 1$ and $\mathbf{z},\mathbf{w} \in\Sigma^{T} \times \Sigma^{T}$, we write that
\begin{align*} 
\sum_{H \in \mathcal{V}, \norm{ H (\mathbf{z} - \mathbf{w})} \leq \mu \norm{\mathbf{z} - \mathbf{w} }} p_{H}(\mathbf{z}) p_{H}(\mathbf{w}) \geq \big( \big(\lambda_{min}^1 \big)^n \big(\lambda_{min}^2 \big)^n \big)^2.
\end{align*}
Let $\eta$ denote $\big(\lambda_{min}^1 \big)^n \big(\lambda_{min}^2 \big)^n$, then for all $\mathbf{z},\mathbf{w} \in\Sigma^{T} \times \Sigma^{T}$, and $\mu<1$, we get \eqref{Th:prod_prob}.
 Therefore, all the conditions of Theorem \ref{Th:Barnsley} are satisfied; hence, we conclude that the model has attractive probability distribution and invariant measure. Because of the attractivity property, the invariant measure is unique. 
\item[(ii)] From part (i) of the proof, we know that the model has contraction on average \eqref{Th:cont_avg} and because probabilities stay away from zero and are Lipschitz continuous; the conditions of Theorem \ref{th:ergodicity} are satisfied. Hence, the ergodic property \eqref{erg_pro} holds.
 \qedhere
 \end{itemize}
\end{proof}
\section{Conclusion}  \label{conclusion}
Alam et al. \cite{Syed2018} proposed a multi-resource allocation algorithm which incurs minimal communication overhead and does not require inter-agent communication.  Moreover, we found the average allocation over a fixed window size and modeled the system as a Markov chain with place-dependent probabilities. We proved that the average allocations vector converges to a unique invariant measure asymptotically, and the ergodic property holds.
As future work, we prove the almost sure convergence of long-term average allocations to optimal allocations.
\bibliographystyle{IEEEtran}
\bibliography{Dist_op} 

\begin{thebibliography}{10}
\providecommand{\url}[1]{#1}
\csname url@rmstyle\endcsname
\providecommand{\newblock}{\relax}
\providecommand{\bibinfo}[2]{#2}
\providecommand\BIBentrySTDinterwordspacing{\spaceskip=0pt\relax}
\providecommand\BIBentryALTinterwordstretchfactor{4}
\providecommand\BIBentryALTinterwordspacing{\spaceskip=\fontdimen2\font plus
\BIBentryALTinterwordstretchfactor\fontdimen3\font minus
  \fontdimen4\font\relax}
\providecommand\BIBforeignlanguage[2]{{%
\expandafter\ifx\csname l@#1\endcsname\relax
\typeout{** WARNING: IEEEtran.bst: No hyphenation pattern has been}%
\typeout{** loaded for the language `#1'. Using the pattern for}%
\typeout{** the default language instead.}%
\else
\language=\csname l@#1\endcsname
\fi
#2}}

\bibitem{Brandt2016}
F.~Brandt, V.~Conitzer, U.~Endriss, J.~Lang, and A.~D. Procaccia,
  \emph{Handbook of Computational Social Choice}, 1st~ed.\hskip 1em plus 0.5em
  minus 0.4em\relax Cambridge University Press, 2016.

\bibitem{Freeman2017}
R.~Freeman, S.~M. Zahedi, and V.~Conitzer, ``Fair and efficient social choice
  in dynamic settings,'' in \emph{Proceedings of the International Joint
  Conference on Artificial Intelligence}, 2017, pp. 4580--4587.

\bibitem{Benade2019}
G.~Benad{\`{e}}, A.~D. Procaccia, and M.~Qiao, ``Low-distortion social welfare
  functions,'' in \emph{The {AAAI} Conference on Artificial Intelligence,
  {AAAI}}, 2019, pp. 1788--1795.

\bibitem{Boutilier2012}
C.~Boutilier, I.~Caragiannis, S.~Haber, T.~Lu, A.~D. Procaccia, and O.~Sheffet,
  ``Optimal social choice functions: A utilitarian view,'' in \emph{Proceedings
  of the ACM Conference on Electronic Commerce}, 2012, pp. 197--214.

\bibitem{Lu2011}
T.~Lu and C.~Boutilier, ``Budgeted social choice: From consensus to
  personalized decision making,'' in \emph{Proceedings of the International
  Joint Conference on Artificial Intelligence}, 2011, pp. 280--286.

\bibitem{Syed2018}
S.~E. Alam, R.~Shorten, F.~Wirth, and J.~Y. Yu, ``Communication-efficient
  distributed multi-resource allocation,'' in \emph{IEEE International Smart
  Cities Conference (ISC2)}, Sep. 2018, pp. 1--8.

\bibitem{Gibbard1977}
A.~Gibbard, ``Manipulation of schemes that mix voting with chance,''
  \emph{Econometrica}, vol.~45, pp. 665--681, 1977.

\bibitem{Gross2017}
S.~Gross, E.~Anshelevich, and L.~Xia, ``Vote until two of you agree: Mechanisms
  with small distortion and sample complexity,'' in \emph{Proceedings of the
  {AAAI} Conference on Artificial Intelligence}, 2017, pp. 544--550.

\bibitem{Igarashi2019}
A.~Igarashi and D.~Peters, ``Pareto-optimal allocation of indivisible goods
  with connectivity constraints,'' in \emph{The {AAAI} Conference on Artificial
  Intelligence}, 2019, pp. 2045--2052.

\bibitem{Oh2019}
H.~Oh, A.~D. Procaccia, and W.~Suksompong, ``Fairly allocating many goods with
  few queries,'' in \emph{The {AAAI} Conference on Artificial Intelligence,
  {AAAI}}, 2019, pp. 2141--2148.

\bibitem{Wirth2014}
F.~Wirth, S.~St\"{u}dli, J.~Y. Yu, M.~Corless, and R.~Shorten, ``Nonhomogeneous
  place-dependent {M}arkov chains, unsynchronised {AIMD}, and optimisation,''
  \emph{J. ACM}, vol.~66, no.~4, pp. 24:1--24:37, 2019.

\bibitem{Chiu1989}
D.~Chiu and R.~Jain, ``Analysis of the increase and decrease algorithms for
  congestion avoidance in computer networks,'' \emph{Computer Networks and ISDN
  Systems}, vol.~17, no.~1, pp. 1--14, 1989.

\bibitem{Dumas2002}
V.~Dumas, F.~Guillemin, and P.~Robert, ``A {M}arkovian analysis of
  additive-increase multiplicative-decrease ({AIMD}) algorithms,'' in
  \emph{Advances in Applied Probability}, 2002, pp. 85--111.

\bibitem{Srikant2004}
R.~Srikant, \emph{The Mathematics of Internet Congestion Control (Systems and
  Control: Foundations and Applications)}.\hskip 1em plus 0.5em minus
  0.4em\relax Springer Verlag, 2004.

\bibitem{Avrachenkov2017}
K.~E. Avrachenkov, V.~S. Borkar, and S.~Pattathil, ``Controlling {G-AIMD} by
  index policy,'' in \emph{{IEEE} Annual Conference on Decision and Control},
  2017, pp. 120--125.

\bibitem{Corless2016}
M.~Corless, C.~King, R.~Shorten, and F.~Wirth, \emph{{AIMD} Dynamics and
  Distributed Resource Allocation}, ser. Advances in Design and Control.\hskip
  1em plus 0.5em minus 0.4em\relax Philadelphia, PA: SIAM, 2016, no.~29.

\bibitem{Barnsley1988}
M.~F. Barnsley, S.~G. Demko, J.~H. Elton, and J.~S. Geronimo, ``Invariant
  measures for markov processes arising from iterated function systems with
  place-dependent probabilities,'' \emph{Annales de l'I.H.P. Probabilités et
  statistiques}, vol.~24, no.~3, pp. 367--394, 1988.

\bibitem{Elton1987}
J.~H. Elton, ``An ergodic theorem for iterated maps,'' \emph{Ergodic Theory and
  Dynamical Systems}, vol.~7, no.~4, pp. 481--488, 1987.

\end{thebibliography}
\end{document}